\documentclass[11pt]{amsart}

\usepackage{amssymb,amsmath,amsthm,amscd,mathrsfs,graphicx}
\usepackage[cmtip,all]{xy}
\usepackage{amscd}
\usepackage{pb-diagram,pb-xy} 

\begin{document}

\theoremstyle{plain} \numberwithin{equation}{section}
\newtheorem*{unlabeledtheorem}{Theorem}
\newtheorem{theorem}{Theorem}[section]
\newtheorem{corollary}[theorem]{Corollary}
\newtheorem{fact}[theorem]{Fact}
\newtheorem{conjecture}{Conjecture}
\newtheorem{lemma}[theorem]{Lemma}
\newtheorem{proposition}[theorem]{Proposition}
\newtheorem{convention}[theorem]{}
\newtheorem*{unlabeledproposition}{Proposition}
\newtheorem{theoremalpha}{Theorem}
\renewcommand{\thetheoremalpha}{\Alph{theoremalpha}}
\newtheorem{hypothesis}{Hypothesis}
\renewcommand{\thehypothesis}{\Alph{hypothesis}}
\newtheorem{propositionalpha}[theoremalpha]{Proposition}
\newtheorem{corollaryalpha}[theoremalpha]{Corollary}

\theoremstyle{definition}
\newtheorem{definition}[theorem]{Definition}
\newtheorem{finalremark}[theorem]{Final Remark}
\newtheorem{remark}[theorem]{Remark}
\newtheorem*{unlabeledremark}{Remark}
\newtheorem{example}[theorem]{Example}
\newtheorem{question}{Question} 
\newtheorem*{warning}{Warning}

\newcommand{\Spec}{\text{Spec}}
\newcommand{\Sets}{\text{Sets}}
\newcommand{\Sch}{\text{Sch}}
\newcommand{\Groups}{\text{Grp}}

\newcommand{\ord}{\text{ord}}
\newcommand{\degree}{\text{deg}}

\newcommand{\calA}{{ \mathcal A}}
\newcommand{\calB}{{ \mathcal B}}
\newcommand{\calC}{{ \mathcal C}}
\newcommand{\calD}{{ \mathcal D}}
\newcommand{\calE}{{ \mathcal E}}
\newcommand{\calF}{{ \mathcal F}}
\newcommand{\calG}{{ \mathcal G}}
\newcommand{\calH}{{ \mathcal H}}
\newcommand{\calI}{{ \mathcal I}}
\newcommand{\calJ}{{ \mathcal J}}
\newcommand{\calK}{{ \mathcal K}}
\newcommand{\calL}{{ \mathcal L}}
\newcommand{\calM}{{ \mathcal M}}
\newcommand{\calN}{{ \mathcal N}}
\newcommand{\calO}{{ \mathcal O}}
\newcommand{\calP}{ {\mathcal P} }
\newcommand{\calQ}{{ \mathcal Q}}
\newcommand{\calR}{{ \mathcal R}}
\newcommand{\calS}{{ \mathcal S}}
\newcommand{\calT}{{ \mathcal T}}
\newcommand{\calU}{{ \mathcal U}}
\newcommand{\calV}{{ \mathcal V}}
\newcommand{\calW}{{ \mathcal W}}
\newcommand{\calX}{{\mathcal X}}
\newcommand{\calY}{{\mathcal Y}}
\newcommand{\calZ}{ {\mathcal Z}}

\newcommand{\Pic}{\operatorname{P}}
\newcommand{\PicO}{\operatorname{P}^{\underline{0}}}
\newcommand{\PicT}{\operatorname{P}^{\tau}}
\newcommand{\PicZero}{\operatorname{P}^{0}}

\newcommand{\Eid}{\operatorname{E}}
\newcommand{\EidO}{\operatorname{E}^{\operatorname{o}}}
\newcommand{\EidT}{\operatorname{E}^{\tau}}

\newcommand{\Quot}{\operatorname{Q}}
\newcommand{\QuotO}{\operatorname{Q}^{\operatorname{o}}}
\newcommand{\QuotT}{\operatorname{Q}^{\tau}}

\newcommand{\Neron}{\operatorname{N}}

\newcommand{\Sh}{\operatorname{Sh}}
\newcommand{\Sheaf}{\operatorname{Sheaf}}

\makeatletter
\@namedef{subjclassname@2010}{\textup{2010} Mathematics Subject 
Classification} \makeatother

\keywords{N\'{e}ron model, stable sheaf, compactified Jacobian, relative Picard functor}
\subjclass[2010]{Primary: 14H40; Secondary: 14H10, 14D22,  14L15.}
\thanks{The author was partially supported by NSF grant DMS-0502170.}

\title[Two ways]{Two ways to degenerate the Jacobian are the same}
\author[Kass]{Jesse Leo Kass}
\address{University of Michigan, Department of Mathematics,  530 Church St, Ann Arbor, MI 48103, USA}
\email{kass@math.harvard.edu}

\date{\today}

\bibliographystyle{alpha}

\begin{abstract}
	We provide sufficient conditions for the line bundle locus in a family of compact
	moduli spaces of pure sheaves to be isomorphic to the N\'{e}ron model. The result
	applies to moduli spaces constructed by Eduardo Esteves and Carlos Simpson, 
	extending results of Busonero, Caporaso, Melo, Oda, Seshadri, and Viviani.
\end{abstract}

\maketitle

\section{Introduction}
\subsection{Background}
This paper relates two different approaches to extending families of Jacobian varieties.  Recall that if 
$X_0$ is a smooth projective curve of genus $g$, then the associated Jacobian variety  is a
 $g$-dimensional smooth projective variety $J_{0}$ that
 can be described  in two different ways: as the universal abelian variety that contains $X_0$ (the Albanese variety), and as the moduli space of  degree $0$ line bundles on $X_0$ (the Picard variety).  If $X_{U} \to U$ is a family of smooth, 
 projective curves, then the Jacobians of the fibers fit together to form a family $J_{U} \to U$.  In this paper, 
$U$ will be an open subset of a smooth curve $B$ (or, more generally, a Dedekind scheme), 
and we will be interested in extending $J_{U}$ to a family over $B$. Corresponding to the two 
different ways of describing the Jacobian (Albanese vs. Picard) are two different approaches to extending the family
$J_{U} \to U$.

Viewing the Jacobian as the Albanese variety, it is natural to try to extend $J_{U} \to U$ by extending 
it to a family of group varieties over $B$.  N\'{e}ron showed that this can be done in a canonical way in 
\cite{neron64}.  He worked with an arbitrary  family of abelian varieties
$A_{U} \to U$ and proved that there is a unique $B$-smooth group scheme $\Neron := \Neron(A_{U}) \to B$
extending $A_{U} \to U$ which is universal with respect to a natural mapping property.  This scheme is called
the N\'{e}ron model.  Arithmetic geometry has seen the use of  the N\'{e}ron  model in a number of important results
 (e.g.,  \cite{mazur}, \cite{mazur77}, \cite{wiles}, \cite{gross}).  The N\'{e}ron model of a Jacobian
variety plays a particularly prominent role, and an alternative description of this scheme
in terms of the relative Picard functor was given by Raynaud (\cite{raynaud70}).  We primarily
work with Raynaud's description, which is recalled in Section~\ref{Sec: RaynaudReview}.

An alternative approach, suggested by viewing the Jacobian as the Picard variety,
is to extend $J_{U} \to U$ as a family of moduli spaces of sheaves.
This approach was first proposed by Mayer and Mumford in \cite{mayer}.   Typically,
one first extends $X_{U} \to U$ to a family of curves $X \to B$
and then extends $J_{U}$ to a family $\bar{J} \to B$ with 
the property that the fiber over a point $b \in B$ is a moduli space of sheaves on $X_b$ parameterizing certain line bundles, 
together with their degenerations.  In this paper, we show that the line bundle locus $J$  in $\bar{J}$
is canonically isomorphic to the N\'{e}ron model  for some schemes $\bar{J}$ constructed in the literature.

To state this more precisely, we need to specify which schemes $\bar{J}$ we consider.  
The problem of constructing such a family of moduli spaces has been studied by many mathematicians who have 
constructed different compactifications (e.g.,  \cite{ishida},  \cite{dsouza}, \cite{oda}, \cite{altman80}, \cite{caporaso94}, \cite{simpson94}, \cite{pand}, \cite{jarvis}, \cite{esteves}).  Many of the difficulties to performing such a construction arise from the fact that, when $X_b$ is reducible, the degree $0$ line bundles on a fiber $X_b$ do not form a bounded family.

For simplicity, assume the residue field $k(b)$ is algebraically closed and $X_b$ is reduced with components labeled 
$X_1, \dots, X_n$.  Given a line bundle $\calM$ of degree $0$ on $X_b$, 
the sequence $(\deg(\calM|_{X_1}), \dots, \deg(\calM|_{X_n}))$ is called the multi-degree of $\calM$. This sequence 
must sum to $0$, but may otherwise be arbitrary, which implies unboundedness.  A bounded family can be 
obtained by fixing the multi-degree, and typically  the scheme $\bar{J}$ is defined so that it parameterizes (possibly coarsely) line bundles (and their degenerations) that satisfy a numerical condition on the multi-degree.  This paper focuses on constructions given by   Simpson ({\cite{simpson94}) and by Esteves (\cite{esteves}), which we now describe in more detail.  

For the Simpson moduli space, the numerical condition imposed on line bundles is slope semi-stabilty with respect to an auxiliary 
 ample line bundle.  This condition arises from the method of construction: the moduli space is constructed using Geometric Invariant Theory, 
and slope stability corresponds to GIT stability.  In general, the Simpson moduli space is a coarse moduli 
space in the sense that non-isomorphic sheaves may correspond to the same point of the space, but it contains 
an open subscheme (the stable locus) that is a fine moduli space, and we will work exclusively with this locus.  
Families of moduli spaces of semi-stable sheaves have been constructed for arbitrary families of projective schemes, but we will only be concerned with 
the case of families of curves.

The moduli spaces of Esteves parameterize sheaves that are $\sigma$-quasi-stable.  Like slope stability, $\sigma$-quasi-stabillity
is a numerical conditions on the multi-degree, but it is defined in terms of an auxiliary vector bundle $\calE$ and a section
$\sigma$, rather than an ample line bundle.  The moduli spaces are constructed for families over an arbitrary locally noetherian
base, but strong conditions are required of the fibers: they must be geometrically reduced.  The space is constructed as a closed subspace of a (non-noetherian, non-separated) algebraic space that was constructed in \cite{altman80}.  For nodal curves, the authors of \cite{viviani10} describe the relation between the Esteves moduli spaces  and  the Simpson moduli spaces.  However, here we treat these moduli spaces separately.

For a discussion of the relation between these moduli spaces and other moduli spaces constructed in the literature, the reader is directed to \cite{alexeev} and \cite[Sect.~2]{kass:local2}.
The reader familiar with the work of Caporaso is warned that there is one potential source of confusion.  In \cite{caporaso94}, the compactified Jacobian associated to a stable curve $X$ parameterizes pairs $(Y,L)$ consisting of a line bundle $L$ on a nodal curve $Y$ stably equivalent to $X$ that satisfies certain conditions.  The line bundle locus $J$ that we study corresponds to the locus parameterizing pairs $(Y,L)$ with $X=Y$.

\subsection{Main Result}The main result of this paper relates the line bundle locus in a proper family of moduli spaces of sheaves to the N\'{e}ron model of the Jacobian:

\begin{theoremalpha}  \label{Thm: StatementPolished}
Let $f \colon X \to B$ be a family of geometrically reduced curves over a Dedekind scheme with $X$ regular.  Define the following schemes:
\begin{enumerate}
	\item $\mathcal{J} \subset \bar{\mathcal{J}}$ is the line bundle locus in one of the following families of moduli spaces:
	\begin{itemize}
		\item the Esteves compactified Jacobian or
		\item the Simpson compactified Jacobian associated to a polarization such  that slope semi-stability coincides with slope stability;
	\end{itemize}
	\item $\operatorname{N}$ is the N\'{e}ron model of the Jacobian of the generic fiber $X_{\eta}$.
\end{enumerate}
Then $$\mathcal{J} = \operatorname{N}.$$
\end{theoremalpha}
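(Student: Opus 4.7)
The plan is to use Raynaud's description of the N\'{e}ron model (reviewed in Section~\ref{Sec: RaynaudReview}) to reduce the theorem to a combinatorial statement: that the stability condition defining $\mathcal{J}$ picks out a unique representative in each Raynaud equivalence class of line bundles. Under the hypotheses ($X$ regular, generic fiber smooth projective), Raynaud identifies (the identity component of) $\operatorname{N}$ with the quotient $\operatorname{Pic}^{[0]}_{X/B}/E$, where $\operatorname{Pic}^{[0]}_{X/B}$ is the subfunctor of the relative Picard algebraic space parameterizing line bundles of total degree $0$ on every geometric fiber, and $E$ is the closure of the unit section---equivalently, the subgroup generated by line bundles $\mathcal{O}_X(D)$ with $D$ supported on closed fibers of $f$. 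The full $\operatorname{N}$ is obtained by allowing all fixed generic total degrees. Since $\mathcal{J}$ is, by construction, an open subscheme of $\operatorname{Pic}_{X/B}$ (stability being an open condition on line bundles), composing the inclusion with Raynaud's projection defines a natural morphism $\varphi\colon \mathcal{J} \to \operatorname{N}$ of $B$-schemes.

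I would next verify that $\varphi$ is a morphism of $B$-smooth separated $B$-schemes restricting to the identity on the generic fiber $J_\eta$. Smoothness of $\mathcal{J}$ follows from regularity of $X$ and smoothness of the generic fiber, which together give that $\operatorname{Pic}_{X/B}$ has smooth fibers. Separatedness of $\mathcal{J}$ is not automatic---it is precisely what the stability condition buys, and will emerge from the fiberwise analysis below. By the universal property of the N\'{e}ron model, a morphism of smooth separated $B$-schemes extending the identity of $J_\eta$ is unique; being between smooth $B$-schemes, $\varphi$ is an isomorphism if and only if it is bijective on geometric points of every closed fiber. Over such a fiber $X_b = X_1 \cup \cdots \cup X_n$, the component group of $\operatorname{N}_b$ is the cokernel of the intersection pairing $\mathbb{Z}^n \to \mathbb{Z}^n$ restricted to total-degree-zero multidegrees, while $\mathcal{J}_b$ parameterizes line bundles on $X_b$ whose multidegree satisfies the relevant stability condition.

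The main obstacle, and heart of the proof, is thus a combinatorial one: showing that each coset in the multidegree lattice modulo the intersection lattice contains exactly one multidegree satisfying the stability condition. For Esteves's construction, existence and uniqueness of a $\sigma$-quasi-stable representative in each coset is essentially what the definition of $\sigma$-quasi-stability is designed to guarantee, and can be extracted from Esteves's original work together with the data $(\mathcal{E}, \sigma)$. For Simpson's construction, the hypothesis that slope-semi-stability coincides with slope-stability is exactly what excludes boundary multidegrees at which multiple semi-stable (but non-stable) representatives could coexist in the same coset---without this hypothesis $\mathcal{J}$ would fail to be separated---and a convex-geometric argument on the slope polytope then produces a unique slope-stable multidegree per coset. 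Once this bijection on special fibers is in place, smoothness of both sides upgrades it to an isomorphism of $B$-schemes, completing the proof.
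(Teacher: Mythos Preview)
Your outline follows the combinatorial route of Busonero, Caporaso, Melo--Viviani, and Oda--Seshadri, which the paper explicitly sets itself apart from. The paper's proof uses no combinatorics: after reducing to a strict henselian base $S$, it shows $\mathcal{J} \to \QuotT$ is an open immersion via Zariski's Main Theorem (Proposition~\ref{Prop: OpenImmersion}), and then obtains surjectivity from the weak N\'{e}ron mapping property. Any generic-fiber point extends to a section of the compactification $\bar{\mathcal{J}}$ by properness, and Lemma~\ref{Lemma: LineBundleSpecialize} (factoriality of the local rings of $X$) forces that section to correspond to a line bundle, hence to land in $\mathcal{J}$. So the compactification $\bar{\mathcal{J}}$ together with factoriality does the work that your multidegree--coset bijection would do; the stability condition is never unpacked.

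Your proposal also has a genuine gap. You reduce everything to the claim that each coset of the intersection lattice contains a unique stable multidegree, and then dismiss it as ``essentially what $\sigma$-quasi-stability is designed to guarantee'' or as ``a convex-geometric argument on the slope polytope.'' In your approach that claim \emph{is} the theorem, and you have not proved it. The existing references establish it only for nodal fibers, whereas Theorem~\ref{Thm: StatementPolished} covers arbitrary geometrically reduced fibers; the combinatorics for general singularities (where the component lattice and intersection form are more delicate) is not available off the shelf. A second issue: you take Raynaud's identification of $\Neron$ with the separated quotient as an input, but Raynaud only proved $\QuotT \cong \Neron$ when residue fields are perfect or a coprimality condition on $\delta$ holds (\cite[Thm.~8.1.4]{raynaud70}). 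The paper instead obtains $\QuotT = \Neron$ as a \emph{consequence} of Theorem~\ref{Thm: Main Theorem}, with no hypothesis on residue fields, so your argument as written would not recover the full statement. (A minor further point: bijectivity on geometric points of fibers does not by itself give an isomorphism of smooth $B$-schemes; you still need something like Zariski's Main Theorem, which is what the paper actually invokes.)
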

Theorem~\ref{Thm: StatementPolished} is the combination of Corollaries \ref{Cor: EstevesJacDedekind} and \ref{Cor: SimpIsNeron}, which themselves are consequences of Theorem~\ref{Thm: Main Theorem}.  Theorem~\ref{Thm: Main Theorem} is quite general, and we expect that it applies to many other fine moduli spaces of sheaves  (but NOT coarse ones).  In particular, Theorem~\ref{Thm: Main Theorem} applies to families of curves with possibly non-reduced fibers, though then general results asserting the existence of a suitable moduli space are unknown (but see Sect.~\ref{Subsec: Genus1} for some simple examples).

The arithmetically-inclinced reader should note that Theorem~\ref{Thm: StatementPolished} and the results later in this paper do not place any hypotheses on the base Dedekind scheme $B$.  In particular, we do not assume that  the residue fields are perfect.  The author was initially surprised by this as there is a body of work (e.g. \cite{qing}, \cite{raynaud70}) showing that various pathologies can arise when $k(b)$ fails to be perfect.

Theorem~\ref{Thm: StatementPolished} has interesting consequences for both the N\'{e}ron model and the compactified Jacobian.   One consequence of the theorem is that N\'{e}ron models of Jacobians can often be constructed over high dimensional bases. The N\'{e}ron model of an abelian variety is only defined over a (regular) $1$-dimensional base $B$, but no such dimensional hypotheses are need to apply the existence results from  \cite{simpson94} and \cite{esteves}.  At the end of Section~\ref{Subsec: Genus1},
we examine a family $J \to \mathbb{P}^2$ over the plane with the property that $\mathbb{P}^2$ is covered by lines $C$ such that the restriction $J_{C}$ is the N\'{e}ron model of its generic fiber.  Surprisingly, while the N\'{e}ron models fit into a $2$-dimensional family, their group structure does not.  

Theorem~\ref{Thm: StatementPolished} also has interesting consequences for the moduli spaces of Esteves and Simpson.  Indeed, if $f \colon X \to B$ is a family of curves satisfying the hypotheses of the theorem, then both the Esteves Jacobians $J_{\calE}^{\sigma}$ and the Simpson Jacobians $J_{\calL}^{0}$ (for $\calL$ as in the hypothesis) are independent of the particular polarizations, and every such Simpson Jacobian is isomorphic to every Esteves Jacobian.  This is not immediate from the definitions.  At the end of Section~\ref{Subsec: Esteves}, we discuss this fact in greater detail and pose a related question.

\subsection{Past Results}  Certain cases of Theorem~\ref{Thm: StatementPolished} were already known.  In his thesis (\cite{busonero}), Simone Busonero established Theorem~\ref{Thm: StatementPolished} for certain Esteves Jacobians.  The thesis is unpublished, but a different proof using similar techniques that extends the result  to the Simpson moduli spaces can be found in the pre-print \cite[Thm.~3.1]{viviani10} by Melo and  Viviani.  They prove Theorem~\ref{Thm: StatementPolished} when the fibers of $f$ are nodal and $X$ is regular.  We do not discuss the Caporaso universal compactified Jacobian here, but the relation between that scheme and the N\'{e}ron model was described by Caporaso in \cite{caporaso08}, \cite{caporaso}, and \cite{caporaso10} (esp. \cite[Thm.~2.9]{caporaso10}). Earlier still, Oda and Seshadri related their $\phi$-semi-stable compactified Jacobians, also not discussed here, to N\'{e}ron models (\cite[Cor.~14.4]{oda}).  In each of those papers, an important step in the proof is  a combinatorial argument establishing that, for example, the natural  map from the set of $\sigma$-quasi-stable multi-degrees to the degree class group is a bijection.

The proof given here does not use any combinatorics, and the idea can be described succinctly.  Consider the special case where
$B := \Spec( \mathbb{C}[[t]] )$, which is a strict henselian discrete valuation ring with algebraically closed residue field.  There is a natural map $J \to \Neron$ to the N\'{e}ron model coming from the universal property of $\Neron$, and an application of Zariski's Main Theorem shows that this morphism is an open immersion. Thus, the only issue is set-theoretic surjectivity.  Because $B$ is henselian, every point on the special fiber of $\Neron$ is the specialization of a section, so surjectivity is equivalent to the surjectivity of the map $J(\mathbb{C}[[t]]) \to J( \operatorname{Frac} \mathbb{C}[[t]] )$ that sends a section to its restriction to the generic fiber.  A given point
$p \in J( \operatorname{Frac} \mathbb{C}[[t]] )$ may be extended to a section $\sigma \in \bar{J}(\mathbb{C}[[t]])$ of $\bar{J}$ by the valuative criteria.  As $\bar{J}$ is a fine moduli space, $\sigma$ corresponds to a family of rank $1$, torsion-free sheaves, which in fact must be a family of line bundles because $X$ is factorial.  We may conclude that $\sigma \in J( \mathbb{C}[[t]] )$, yielding the result.

\subsection{Questions} It would be interesting to know when 
a Simpson Jacobian $J_{\calL}^{0}$ satisfying the hypotheses of Theorem~\ref{Thm: StatementPolished}
exists; that is, given a family $f \colon X \to B$, does there exist an ample
line bundle $\calL$ such that every $\calL$-slope semi-stable sheaf of degree $0$ is stable?  This question seems most interesting 
when the geometric fibers of $f$ are non-reduced.  We briefly survey the literature on this topic at the end of Section~\ref{Subsec: Simpson}.

More generally, given a family $f \colon X \to B$, it would be desirable to have a description of the maximal
subfunctors of the degree $0$ relative Picard functor $\PicZero$ representable by a separated $B$-scheme.  
We discuss this question in Section~\ref{Subsec: Genus1}, where we analyze the simple case of genus $1$ curves.

\subsection{Organization} We end this introduction with a few technical remarks about the paper.  The moduli spaces of sheaves
that we consider are moduli spaces of pure sheaves.  On a curve, a coherent sheaf is pure if and only if it is 
Cohen--Macaulay.  This condition is also equivalent to the condition of being torsion-free
in the sense of elementary algebra when the curve is integral, and 
the term ``torsion-free" is sometimes used in place of ``pure."  

The term  ``family of curves" only to refers to families with geometrically irreducible generic
fibers.  This is done to avoid notational complications concerning multi-degrees.   Families of curves 
are required to be proper but not projective.  A family of curves over a Dedekind scheme
can fail to be projective (e.g. \cite[8.10]{altman80}), but projectivity is automatic if the local rings of 
the total space are factorial, which is the main case of interest.  (See Prop.~\ref{Prop: Projective}.)

We prove the main results for families over a base  scheme $S$ that is the spectrum of a strict henselian discrete valuation ring rather than a
more general Dedekind scheme.  Doing so lets us make section-wise arguments because a smooth family of a henselian base
admits many sections.  Furthermore, this is not a real restriction: results over a general Dedekind base can be deduced 
by passing to the strict henselization. 

The body of the paper is organized as follows.   In Section~\ref{Sec: RaynaudReview}, we review 
Raynaud's construction of the maximal separated quotient.  We then relate this scheme is to 
a general moduli space of line bundles satisfying some axioms in Section~\ref{Sec: MainThm}.  Finally, we describe some schemes
that satisfy these axioms in the  final section, Section~\ref{Sec: Apps}.

\section*{Conventions}
\begin{convention}
	The symbol $X_{T}$ denotes the fiber product $X \times_{S} T$.  
\end{convention}

\begin{convention}
	The letter $S$ denotes the spectrum of a strict henselian discrete valuation ring with special point $0$ and generic point $\eta$.
\end{convention}

\begin{convention}
	A curve over a field $k$ is a proper $k$-scheme $f_0 \colon X_0 \to \Spec(k)$ that is geometrically connected and of pure  dimension $1$.
\end{convention}

\begin{convention}
	If $B$ is a scheme, then a family of curves over $B$ is a proper, flat morphism
$f \colon X \to B$ whose fibers are curves and whose geometric generic fibers are irreducible.
\end{convention}

\begin{convention}
	If $f \colon Y \to B$ is a finitely presented morphism, then we write $Y^{\text{sm}} \subset Y$ for the smooth 
	locus of $f$.
\end{convention}

\begin{convention}
	A coherent module $I_0$ on a noetherian scheme $X_0$ is
	rank $1$ if the localization of $I_0$
	at $x$ is isomorphic to  $\calO_{X_0, x}$ for every generic point $x$.
\end{convention}

\begin{convention}
	A coherent module $I_0$ on a noetherian scheme $X_0$ is pure if the dimension of $\operatorname{Supp}(I_0)$
	equals the dimension of $\operatorname{Supp}(J_0)$ for every non-zero subsheaf $J_0$ of $I_0$.
\end{convention}

\begin{convention}
	If $X_0 \to \Spec(k)$ is proper, then the degree of a coherent $\calO_{X_0}$-module
	$\calF$ is defined by $\degree(\calF) := \chi(\calF) - \chi(\calO_{X})$.
\end{convention}

\section{Raynaud's Maximal Separated Quotient} \label{Sec: RaynaudReview}
We begin by reviewing Raynaud's construction of the N\'{e}ron model of a Jacobian and, more generally, the 
maximal separated quotient of  the relative Picard functor  (\cite{raynaud70}).  Much of this material is also 
treated in \cite[Chap.~9]{bosch90}.  

Let $S$ be a strict henselian discrete valuation ring with generic point $\eta$
and special point $0$.  Given a family of curves $f \colon X \to S$, the \textbf{relative Picard functor} $\Pic$ of $f$ is defined 
to be the \'{e}tale sheaf $\Pic \colon \text{$S$--$\Sch$} \to \Groups$ associated to functor 
\begin{equation}  \label{Eqn: PicPresheaf}
	T \mapsto \operatorname{Pic}(X_{T}).
\end{equation}
Here $\operatorname{Pic}(X_{T})$ is the set of isomorphism classes of line bundles on $X_{T}$.
Raynaud  actually defines $\Pic$ to be the associated fppf sheaf but then observes
that this is the same as the associated \'{e}tale sheaf (\cite[1.2]{raynaud70}; see also \cite[Rmk.~9.2.11]{kleiman05}). 
The fibers of $\Pic$ are representable by group schemes locally of finite type, and
$\Pic$ itself is representable by an algebraic space if and only if $f$ is cohomologically flat
(\cite[Thm.~5.2]{raynaud70}).  Regardless of the representability properties, $\Pic$
is locally finitely presented and formally $S$-smooth.

Inside of $\Pic$, we may consider the subfunctor $\Eid \colon \text{$S$--$\Sch$} \to \Groups$ that is defined to be
 the scheme-theoretic closure of the identity 
section.  This is the fppf subsheaf of $\Pic$ generated by the elements
$g \in \Pic(T)$, where $T \to S$ is flat and $g_{\eta} \in \Pic(T_{\eta})$
is the identity element.  When $\Pic$ is a scheme, this coincides with the 
usual notion of closure.  The representability properties of $\Eid$ are similar to those
of $\Pic$: the fibers of $\Eid$ are group schemes locally of finite type  and $\Eid$ is 
representable by an algebraic space precisely when $f$ is cohomologically flat (\cite[Prop.~5.2]{raynaud70}).
When representable, $\Eid \to S$ is an \'{e}tale $S$-group space;  in general, the generic
fiber of $\Eid$ is the trivial group scheme, and the special fiber is a group scheme of dimension
equal to $h^{0}(\calO_{X_0}) - h^{0}(\calO_{X_{\eta}})$.  

When $\Eid$ is not the trivial $S$-group scheme, $\Pic$ does not satisfy the valuative
criteria of separatedness.  We can, however, form the \textbf{maximal separated quotient}
  $\Quot \colon \text{$S$--$\Sch$} \to \Groups$ of $\Pic$.  By definition, this is the fppf quotient sheaf 
$\Quot := \Pic/\Eid$.   The maximal separated quotient  $\Quot$ is always representable by a
scheme that is $S$-smooth, separated, and locally 
of finite type (\cite[Thm.~4.1.1, Prop.~8.0.1]{raynaud70}).   Rather than working directly with $\Quot$, we shall primarily work
with a slightly smaller subfunctor $\QuotT \colon \text{$S$--$\Sch$} \to \Groups$, which we now define.

Suppose generally that $B$ is a scheme and $G \colon \text{$B$--$\Sch$} \to \Groups$ is
a $B$-group functor whose fibers are representable by group schemes  locally of finite type.  
For every point $b \in B$, we may form the identity component $G^{\operatorname{o}}_{b} \subset G_{b}$ 
and the component group $G_{b}/G^{\operatorname{o}}_{b}$.  The subgroup
functor $G^{\tau} \subset G$ is defined to the subfunctor whose $T$-valued points
are elements $g \in G(T)$ with the property that, for every $t \in T$ mapping to $b \in B$,
the element $g_{t} \in G_{b}(k(t))$ maps to a torsion element of $G_{b}/G^{\operatorname{o}}_{b}(k(t))$.
If we instead require that $g_{t}$ maps to the identity element, then 
we obtain the subgroup functor $G^{\operatorname{o}} \subset G$.
Let us examine these constructions when $B$ equals $S$ and $G$ equals  $\Pic$ or  $\Quot$.

The functors $\Pic^{\operatorname{o}}$ and $\PicT$ coincide, and this common 
functor is the \'{e}tale sheaf associated to the assignment sending $T$ to
the set of isomorphism classes of line bundles on $X_{T}$ that fiber-wise have multi-degree $0$.
From this description, it is easy to see that $\Pic^{\operatorname{o}} = \PicT \subset \Pic$
is an open subfunctor.  Another open subfunctor of $\Pic$ is the subfunctor
parameterizing line bundles on $X_{T}$ with fiber-wise degree $0$, which
we denote by $\PicZero$.  It is typographically difficult to distinguish between
$\PicZero$ and $\Pic^{\operatorname{o}}$, but we will not make use of $\Pic^{\operatorname{o}}$
in this paper, so this should not cause confusion.

The functors $\QuotO$ and $\QuotT$ are different in general.  They 
are, however, both open subfunctors of $\Quot$ 
(\cite[Thm.~1.1({\romannumeral 1}.{\romannumeral 1}), Cor.~1.7]{groth95}).
In particular, they are both representable by smooth and separated $S$-group
schemes that are locally of finite type.  In fact,
both schemes are of finite type over $S$ as their fibers are easily
seen to have a finite number of connected components.  The condition
that $\QuotT \subset \Quot$ is a closed subscheme is 
important, but slightly subtle.  A characterization of this condition
is given by Prop.~8.1.2({\romannumeral 3}) of \cite{raynaud70}; one sufficient
(but not necessary) condition for $\QuotT \subset \Quot$ to be closed
is that the local rings of $X$ are factorial.

The factoriality condition is also
almost sufficient to ensure that $\QuotT$ is the N\'{e}ron model of 
its generic fiber.  Suppose that the generic fiber of $f$ is 
smooth, so the generic fiber of $\QuotT \to S$ is an abelian
variety, and thus it makes sense to speak of the N\'{e}ron model $\Neron := \Neron(\QuotT_{\eta})$.  
By the universal property, there is a unique morphism
$\QuotT \to \Neron$ that is the identity on the generic fiber.  
Theorem~8.1.4  of \cite{raynaud70} states that if the local
rings of $X$ are factorial, then $\QuotT \to \Neron$ is an isomorphism
in the following cases: when $k(0)$ is perfect and when
a certain invariant $\delta$ is coprime to the residual characteristic.

The proof uses the characterization of the N\'{e}ron
model in terms of the weak N\'{e}ron mapping 
property.  Recall that a $S$-scheme $Y \to S$ is said
to be a weak N\'{e}ron model of its generic 
fiber if the natural map $Y(S) \to Y(\eta)$ is bijective.  If 
$G \to S$ is a finite type $S$-group scheme whose generic fiber
is an abelian variety, then $G$ is the N\'{e}ron model of its
generic fiber if and only if it satisfies the weak N\'{e}ron
mapping property (\cite[Sec.~7.1, Thm.~1]{bosch90}).  

\section{The Main Theorem} \label{Sec: MainThm}
Here we derive the main results for families over a strict henselian 
discrete valuation ring $S$.  Specifically, we provide sufficient condition
for the maximal separated quotient $\QuotT$ of the Picard functor to be the N\'{e}ron 
model, and we relate $\QuotT$ to a fine moduli  space of line bundles that satisfies certain axioms. 
These moduli spaces are, by definition, subfunctors of a (large) functor that we now define.

\begin{definition} \label{Def: Sh}
	If $T$ is a $S$-scheme, then we define 
	$\Sheaf(X_{T})$ to be the set of isomorphism classes of
	$\calO_{T}$-flat, finitely presented $\calO_{X_T}$-modules
	$\calI$ on $X_{T}$ that 
	are fiber-wise pure, rank $1$, and
	of degree $0$.

	The functor $\Sh = \Sh_{X/S} \colon \text{$S$--$\Sch$} \to \Sets$
	is defined to be the \'{e}tale sheaf associated to the functor
	\begin{equation} \label{Eqn: ShRule}
		T \mapsto \Sheaf(X_{T}).
	\end{equation}
\end{definition}
There is a tautological 
transformation $\PicZero \to \Sh$ that realizes $\PicZero$ as a subfunctor
of $\Sh$.  In fact:

\begin{lemma} \label{Lemma: Subfunctor}
	The subfunctor  $\PicZero \subset \Sh$ is open.
\end{lemma}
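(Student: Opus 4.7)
To show $\PicZero\subset\Sh$ is open, I need to verify that for every $S$-scheme $T$ and every $\phi\colon T\to\Sh$, the fiber product $T\times_{\Sh}\PicZero$ is represented by an open subscheme of $T$. Since being open is étale-local on $T$, and $\Sh$ was defined as the étale sheafification of the presheaf $T\mapsto\Sheaf(X_T)$, I may pass to an étale cover of $T$ and assume $\phi$ is represented by an actual sheaf $\calI$ on $X_T$ satisfying the conditions of Definition~\ref{Def: Sh}: $\calO_T$-flat, finitely presented, fiber-wise pure, fiber-wise rank $1$, and fiber-wise degree $0$. Since every such $\calI_t$ already has degree $0$, landing in $\PicZero$ is equivalent to $\calI_t$ being a line bundle on $X_t$, so I need to show that the set $T_0:=\{t\in T : \calI_t\text{ is locally free}\}$ is open and represents the fiber product.

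The key step is standard. The locus $V\subset X_T$ on which $\calI$ is locally free is open (finitely presented modules have open free locus). Because $\calI$ is $T$-flat and finitely presented, the local criterion of flatness gives that $\calI_t$ is free at $x\in X_t$ if and only if $\calI$ is free at $x\in X_T$; hence $V\cap X_t$ is precisely the locally free locus of $\calI_t$. Therefore $t\in T_0$ if and only if $X_t\cap(X_T\setminus V)=\emptyset$, i.e.\ $t$ does not lie in the image of the closed set $Z:=X_T\setminus V$ under $f_T\colon X_T\to T$. Because $f_T$ is proper, $f_T(Z)$ is closed, and $T_0=T\setminus f_T(Z)$ is open.

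Finally, I check that $T_0$ (with its open subscheme structure) represents $T\times_{\Sh}\PicZero$. A morphism $g\colon T'\to T$ lifts to $\PicZero$ exactly when the pullback $g^*\calI$ is étale-locally a line bundle, and by the openness statement applied on $X_{T'}$ this happens if and only if $g^*\calI$ is everywhere locally free, i.e.\ if and only if $g$ factors set-theoretically through $T_0$; since $T_0$ is an open subscheme of $T$, factoring set-theoretically is equivalent to factoring scheme-theoretically. The main obstacle is really only notational: disentangling the sheafification so as to argue with a genuine coherent sheaf. Once that is done, the rest is openness of the free locus plus properness of $f_T$.
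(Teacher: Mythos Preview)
Your proof is correct and follows essentially the same approach as the paper: reduce via an \'etale cover to the case where $\phi$ is represented by an actual sheaf $\calI$, then show that the locus of $t\in T$ where $\calI_t$ is a line bundle is open and represents the fiber product. The only difference is that the paper cites \cite[Lemma~5.12(a)]{altman80} for the openness of this locus, whereas you supply a direct argument (open free locus on $X_T$, fiberwise comparison via flatness, and properness of $f_T$); your argument is in fact a proof of the cited lemma in this setting.
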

\begin{proof}
	Given a $S$-scheme $T$ and a morphism $g \colon T \to \Sh$, we must show
	that $T \times_{\Sh} \PicZero$ is representable by a scheme and
	that $T \times_{\Sh} \PicZero \to T$ is an open immersion.  Thus, let $g$ be given.
		
	By definition, there exists an \'{e}tale surjection $p \colon T' \to T$
	and a sheaf $\calI' \in \Sheaf(X_{T'})$ that represents 
	$g \circ p \colon T' \to \Sh$.  
	Consider the subset $U' \subset T'$ of points $t \in T'$ with the
	property that  the restriction of $\calI$ to the 
	fiber $X_{t}$ is a line bundle.  This locus is
	open by  \cite[Lemma~5.12(a)]{altman80},
	and one may easily show that $U'$ represents
	$T' \times_{\Sh} \PicZero$.  A descent argument
	establishes the analogous property for the
	image $U$ of $U'$ under $T' \to T$.  
	This completes the proof.
\end{proof}

A remark about topologies: we work with the \'{e}tale
sheaf associated to Equation~\ref{Eqn: ShRule},
but one could instead work with the associated
fppf sheaf.  When $f$ is projective; it is a theorem
of Altman--Kleiman that the subfunctor of
$\Sh$ parameterizing simple sheaves
can be represented by a quasi-separated, locally 
finitely presented $S$-algebraic space, and hence is a fppf sheaf 
(\cite[Thm.~7.4]{altman80}).  
We do not know if $\Sh$ is an fppf sheaf in general.  Here
$\Sh$ is just used as a tool for keeping track
of schemes, and certainly any representable
subfunctor of $\Sh$  is a fppf sheaf.

One reason for working with the \'{e}tale topology instead of the
fppf topology is that it makes the following fact easy to prove.
\begin{fact} \label{Lemma: SectionsAreFamilies}
	The natural map $\Sheaf(X) \to \Sh(S)$ is surjective.
\end{fact}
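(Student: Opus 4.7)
The plan is to exploit the fact that $S$ is strictly henselian, so that every étale cover of $S$ admits a section, in order to trivialize the descent data implicit in the étale sheafification.

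First, I would unwind what it means for $\xi$ to be an element of $\Sh(S)$. By the definition of $\Sh$ as the étale sheafification of the presheaf \eqref{Eqn: ShRule}, there exists an étale surjection $p \colon T \to S$ together with an element $\calI \in \Sheaf(X_T)$ such that the image of $\calI$ under the canonical map $\Sheaf(X_T) \to \Sh(T)$ coincides with $p^{\ast}\xi$. (We may arrange that the two pullbacks of $\calI$ to $X_{T \times_S T}$ agree in $\Sh(T \times_S T)$, but this will not be needed.)

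Second, because $S$ is strictly henselian, any étale surjection $p \colon T \to S$ admits a section $s \colon S \to T$: pick any point of $T$ above the closed point of $S$; its residue field is a finite separable extension of the separably closed residue field of $S$ and so equals it, and the strict henselian property then lifts this point to a section. Let $\tilde{s} \colon X \to X_T$ denote the induced map on curves, and define $\calI_{0} := \tilde{s}^{\ast}\calI$. Flatness over $S$ and finite presentation are preserved by this base change, and the fibers of $\calI_{0}$ over the two points of $S$ are naturally identified with the fibers of $\calI$ over $s(0)$ and $s(\eta)$, so fiber-wise purity, rank one, and degree zero are inherited. Thus $\calI_{0} \in \Sheaf(X)$.

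Finally, I would check that the class of $\calI_{0}$ in $\Sh(S)$ equals $\xi$. Naturality of the sheafification map $\Sheaf \to \Sh$ together with $p \circ s = \mathrm{id}_{S}$ gives that the image of $\calI_{0}$ in $\Sh(S)$ equals $s^{\ast}(p^{\ast}\xi) = \xi$, completing the argument.

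The only real content here is Step~2, the existence of the section $s$; once that is in hand, the rest is a formal verification that pullback under a section preserves the defining conditions of $\Sheaf$ and is compatible with the sheafification map. This is why the author stipulates early in the paper (see the Conventions and the remark preceding the fact) that it is convenient to work with the strictly henselian base and with the étale, rather than fppf, topology: the corresponding statement for the fppf sheafification would require a genuine descent argument rather than a one-line application of the henselian property.
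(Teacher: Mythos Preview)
Your proof is correct and follows essentially the same approach as the paper: both exploit that $S$ is strictly henselian so that an \'etale cover $T \to S$ admits a section (equivalently, as the paper phrases it citing \cite[Prop.~18.8.1(c)]{ega44}, the cover may be taken to be an isomorphism $S \to S$), after which the claim is immediate. The paper's version is terser, absorbing your pullback-and-verify steps into the phrase ``in which case the result is obvious,'' but the content is identical.
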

\begin{proof}
	Let $g \in \Sh(S)$ be given.  By definition, there is an \'{e}tale morphism
	$S' \to S$ and an element $\calI' \in \Sheaf(X_{S'})$ that maps
	to $g_{S'} \in \Sh(S')$.  But $S$ is strict henselian, so $S' \to S$ 
	may be taken to be an isomorphism $S \to S$ (\cite[Prop.~18.8.1(c)]{ega44}), in which case the
	result is obvious.  
\end{proof}

The following two facts about separably closed fields are standard, but they 
will be used so frequently that it is convenient to record them.

\begin{fact} \label{Fact: SepPoints}
	If $k(0)$ is a separably closed field and $f_0 \colon Y_0 \to \Spec(k(0))$ 
	is smooth of relative dimension $n$, then the closed points of $Y_0$ with residue field $k(0)$ 
	are dense.
\end{fact}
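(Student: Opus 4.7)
The plan is to reduce the density statement to the corresponding fact for affine space and then use the hypothesis on $k(0)$ to lift rational points through an étale chart.

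First I would observe that a separably closed field is necessarily infinite (a finite field admits non-trivial separable extensions), so for any $m \geq 0$ the $k(0)$-rational points of $\mathbb{A}^{m}_{k(0)}$ are Zariski dense. This is the only external input needed. The goal then becomes: every non-empty open subset $U \subseteq Y_0$ contains a point with residue field $k(0)$, which I will show is automatically closed in $Y_0$ because $Y_0$ is of finite type over $k(0)$ and residue fields of closed points of finite-type $k(0)$-schemes are exactly the finite extensions of $k(0)$.

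Next, I would use the local structure of smooth morphisms: around any point of $Y_0$, after shrinking to an open neighborhood $V$, there exists an étale morphism $g \colon V \to \mathbb{A}^{n}_{k(0)}$. This is the standard statement that smooth morphisms are étale-locally coordinate projections. Given a non-empty open $U \subseteq Y_0$, I may intersect with some such $V$ and assume without loss of generality that $U \subseteq V$, so that $g|_{U} \colon U \to \mathbb{A}^{n}_{k(0)}$ is étale. Since étale morphisms are open, $g(U)$ is a non-empty open subset of $\mathbb{A}^{n}_{k(0)}$, and by the density of rational points noted above, it contains a $k(0)$-rational point $p$.

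Finally, I would analyze the fiber $g^{-1}(p)$. Being the fiber of an étale morphism over a point with residue field $k(0)$, it is an étale $k(0)$-scheme, hence a finite disjoint union of spectra of finite separable field extensions of $k(0)$. Since $k(0)$ is separably closed, each such extension is trivial, so every point of $g^{-1}(p)$ has residue field $k(0)$, and by the first paragraph each is closed in $Y_0$. As $p \in g(U)$, the set $g^{-1}(p) \cap U$ is non-empty, producing the desired closed point of $Y_0$ inside $U$ with residue field $k(0)$. The only step that requires any thought is invoking the étale-local standard form of a smooth morphism; after that, all the pieces fit together formally.
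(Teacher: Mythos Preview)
Your proposal is correct and follows essentially the same route as the paper's proof: cover $Y_0$ by \'etale charts to $\mathbb{A}^n_{k(0)}$, use density of $k(0)$-rational points in affine space, and observe that the fiber of an \'etale map over such a point is a finite \'etale $k(0)$-algebra, hence a disjoint union of $k(0)$-points. The only difference is that you supply slightly more justification (why separably closed implies infinite, why $k(0)$-rational points are closed), which the paper leaves implicit.
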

\begin{proof}
	This is \cite[Cor.~13]{bosch90}.  The scheme $Y_0$ can be covered by affine 
	opens $U_0$ that admit an \'{e}tale morphism $p \colon U_0 \to \mathbb{A}^{n}_{k(0)}$.
	Certainly, the closed points with residue field $k(0)$ are dense in the 
	image of $p$.  If $v_0 \in \mathbb{A}^{n}_{k(0)}$ is one such point, then $p^{-1}(v_0)$
	is a finite, \'{e}tale $k(0)$-algebra, hence a disjoint union of closed points
	defined over $k(0)$.  Density follows.
\end{proof}

Fact~\ref{Fact: SepPoints} is typically used together with the following 
fact to assert that a smooth morphism has many sections.

\begin{fact}\label{Fact: SectionsExist}
	Let $Y \to S$ be a smooth morphism over strict henselian discrete valuation
	ring.  Then $Y(S) \to Y(k(0))$ is surjective.  
\end{fact}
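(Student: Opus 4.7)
The plan is to reduce to the affine case by the local structure theorem for smooth morphisms and then appeal to the henselian property of $S$. Let $y_0 \in Y(k(0))$ be given; we want to produce a section $s \in Y(S)$ specializing to $y_0$ at the closed point.

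First, since $Y \to S$ is smooth, in a Zariski-open neighborhood $U$ of the image of $y_0$ in $Y$ there exists an étale $S$-morphism $p \colon U \to \mathbb{A}^{n}_{S}$, where $n$ is the relative dimension of $Y \to S$ at $y_0$ (this is the standard local structure result for smooth morphisms, e.g.\ \cite[IV, 17.11.4]{ega44}). Replacing $Y$ by $U$, we may assume such a $p$ is defined globally.

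Next, set $a_0 := p(y_0) \in \mathbb{A}^{n}_{k(0)}$. Since $\mathbb{A}^n_S \to S$ is affine space, we can trivially lift $a_0$ to a section $a \colon S \to \mathbb{A}^{n}_{S}$: simply pick any elements of the DVR whose reductions give the coordinates of $a_0$ in $k(0)$. Now form the fiber product
\[
Y' := Y \times_{\mathbb{A}^{n}_{S}} S,
\]
where the map $S \to \mathbb{A}^{n}_{S}$ is $a$. The projection $Y' \to S$ is étale (base change of $p$), and $y_0$ together with $a_0 = p(y_0)$ determines a $k(0)$-point $y'_0 \in Y'(k(0))$ lying over the closed point of $S$.

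Finally, I would invoke the defining property of a strict henselian local ring: any étale morphism $Y' \to S$ with a $k(0)$-rational point $y'_0$ in the closed fiber admits a unique section $S \to Y'$ sending the closed point to $y'_0$ (see, e.g., \cite[Prop.~18.5.15]{ega44} or the standard characterizations recalled in \cite[Chap.~2]{bosch90}). Composing this section with $Y' \to Y$ gives the required section $s \in Y(S)$ specializing to $y_0$. There is essentially no obstacle here, since each step is either a formal reduction or a direct appeal to standard results; the only point worth flagging is that one must verify that the lifted section $s$ actually has closed-fibre value $y_0$, which is automatic because the section of $Y' \to S$ is unique with this property and the composition $Y' \to Y$ is étale over a neighborhood of $y_0$.
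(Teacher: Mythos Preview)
Your proof is correct and follows essentially the same route as the paper's. The paper simply cites \cite[Cor.~17.17.3]{ega44} (which produces the desired section over an \'etale neighborhood of $S$) and then invokes \cite[Prop.~18.8.1(c)]{ega44} to say that, because $S$ is strictly henselian, that \'etale neighborhood may be taken to be $S$ itself; your argument unpacks the same mechanism explicitly via the local structure theorem and the section-lifting property of strictly henselian rings.
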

\begin{proof}
	This is \cite[Cor.~17.17.3]{ega44} (or \cite[Prop.~14]{bosch90}).  If $U$ and $X'$ are as in the statement of \cite[Cor.~17.17.3]{ega44}, then we 
	must have $U = S$ and $X' \to U$ may be taken to be an isomorphism 
	(again, by \cite[Prop.~18.8.1(c)]{ega44}).
\end{proof}

We now prove the main results of the paper.
\begin{proposition} \label{Prop: OpenImmersion} 
	Let $f \colon X \to S$ be a family of curves and $J \subset \PicZero$ a subfunctor such that
the generic fibers $J_{\eta} =\PicZero_{\eta}$ coincide.  Assume $J$ is represented by a 
smooth, finitely presented  $S$-scheme.  

If $J$ is $S$-separated, then $J \to \Quot$ is an open immersion.
Furthermore, the image is contained in $\QuotT$ provided $\QuotT \subset \Quot$ is closed (e.g., 
the local rings of $X$ are factorial).
\end{proposition}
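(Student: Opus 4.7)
The plan is to apply Zariski's Main Theorem to the morphism $\phi \colon J \to \Quot$ defined as the composition $J \hookrightarrow \PicZero \subset \Pic \to \Quot$. Since $J$ is $S$-separated and $\Quot$ is $S$-separated by Raynaud's theorem, $\phi$ is separated; since $J$ is finitely presented over $S$ and $\Quot$ is locally of finite type, $\phi$ is of finite type.

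The essential step is to verify that $\phi$ is a monomorphism. Fix an $S$-flat test scheme $T$ and suppose $j_1, j_2 \in J(T)$ satisfy $\phi(j_1) = \phi(j_2)$. Regarding $j_1, j_2$ as elements of $\Pic(T)$ via the inclusion $J \subset \PicZero \subset \Pic$, the difference $j_1 \otimes j_2^{-1}$ lies in $\Eid(T)$; by the very construction of $\Eid$ as the scheme-theoretic closure of the identity, this forces $j_{1,\eta} = j_{2,\eta}$ in $\Pic(T_\eta)$, hence in $J(T_\eta)$ because $J_\eta = \PicZero_\eta$ as a subfunctor of $\Pic_\eta$. Since $T$ is $S$-flat, $T_\eta \subset T$ is schematically dense, and the $S$-separatedness of $J$ forces $j_1 = j_2$. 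The general, non-flat case reduces to this one by fppf descent along an $S$-flat cover of $T$.

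A monomorphism of finite type between locally noetherian schemes is automatically unramified and quasi-finite, so Zariski's Main Theorem supplies a factorization $J \xrightarrow{\iota} \bar{J} \xrightarrow{\pi} \Quot$ with $\iota$ an open immersion and $\pi$ finite. To upgrade $\phi$ itself to an open immersion, I would invoke that $J$ and $\Quot$ are both regular $S$-schemes of the same relative dimension $g$ (dictated by $J_\eta = \PicZero_\eta$), so that the quasi-finite map $\phi$ is flat by miracle flatness, and then apply EGA IV, Thm 17.9.1: a flat, locally finitely presented monomorphism is an open immersion. The main obstacle I anticipate is controlling the possibility of ramification or extraneous components on the special fiber; this is exactly where the separatedness of $J$ enters, by way of the monomorphism argument above.

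For the final assertion, observe that $\phi(J_\eta) = \PicZero_\eta$ is contained in the identity component $\QuotO_\eta \subset \QuotT$. Because $J$ is $S$-flat, $J_\eta$ is schematically dense in $J$, so the image $\phi(J)$ lies in the topological closure of $\phi(J_\eta)$ inside $\Quot$. When $\QuotT \subset \Quot$ is closed, this closure is contained in $\QuotT$, which gives the desired inclusion.
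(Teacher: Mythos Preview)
Your proposal has a genuine gap in the monomorphism step. The reduction ``the general, non-flat case reduces to this one by fppf descent along an $S$-flat cover of $T$'' does not work: if $T$ is supported on the closed point (for instance $T=\Spec k(0)$), every fppf cover $T'\to T$ is a $k(0)$-scheme, so the uniformizer of $S$ acts as zero on $T'$ and $T'$ cannot be $S$-flat. Thus there is no way to pass from an arbitrary $T$ to an $S$-flat one, and your argument never touches the case that actually matters, namely points of the special fiber of $J$.

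The paper fills exactly this gap by exploiting the strict henselian hypothesis on $S$, which you never use. Instead of proving that $J\to\Quot$ is a monomorphism, the paper checks injectivity on \emph{closed points} of the special fiber: given $p\in J(k)$ (with $k=\overline{k(0)}$), smoothness of $J$ plus strict henselianity of $S$ lift $p$ to a section $\sigma\in J(S)$. Now the fiber of $\Quot$ through $p$ is $(\sigma+\Eid)(k)$, and $(\sigma+\Eid)\cap J$ is the schematic closure of $\sigma_\eta$ in $J$, which collapses to $\sigma$ by separatedness---this is your $S$-flat argument, but applied to the one $S$-flat test object that is always available, namely $S$ itself. Injectivity on closed points plus Chevalley's theorem forces all fibers to be finite; then Zariski's Main Theorem (birational, quasi-finite, separated, with $\Quot$ $S$-smooth hence normal) gives the open immersion directly. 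Your final paragraph on the containment in $\QuotT$ is fine and matches the paper's argument.
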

\begin{proof}
	This is an application of Zariski's Main Theorem.  We begin by showing that
	the induced map $J \to \Quot$ is injective on closed points.  It is enough to
	verify this after extending base $S$ so that $k(0)$ is algebraically closed.	
  	Thus, we will temporarily assume $k := k(0)$ is algebraically closed and work with 
	$k$-valued points instead of closed points.  Given $q \in \Quot(k)$, there is nothing 
 	show when the fiber over $q$ is empty.  If non-empty, pick $p \in J(k)$ mapping to $q$.
	We may invoke Fact~\ref{Fact: SectionsExist} to assert that there
	exists a section $\sigma \in J(S)$ with $\sigma(0)=p$.  
		
	The fiber of $q$ under $\Pic \to \Quot$ is the set of elements of the form
	$p + e$ with $e \in \Eid(k)$ or, equivalently, the elements of $(\sigma + \Eid)(k)$ (\cite[Cor.~4.1.2]{raynaud70}).
	Restricting to $J$, we see that the fiber of $q$ under $J \to \Quot$ is the set of $k$-valued points
	of $(\sigma + \Eid) \cap J$.
	But $(\sigma + \Eid) \cap J$ is  the scheme-theoretic closure of $\sigma$ in $J$ (by \cite[2.8.5]{ega42}), which is 
	just the image of $\sigma$ by separatedness.  In particular, the pre-image
	of $q$ under $J \to \Quot$ must be the singleton set $\{ p \}$.  This proves 
	that the map is injective on closed points.  We now return to the case
	where $S$ is a henselian discrete valuation ring (so $k(0)$ is no longer
	 assumed to be algebraically closed).
	
	It follows that the set-theoretic fibers of $J \to \Quot$ are finite sets.  Indeed, if 
	$Z \subset J$ is the locus of points $x \in J$ with the property that  $x$ lies in a 
	positive dimensional fiber, then $Z$ is closed by Chevalley's Theorem 
	(\cite[13.1.3]{ega42}).  Furthermore, $Z$ is contained in the special fiber $J_0$
	and contains no closed points.  This is only possible if $Z$ is the empty scheme.
	In other words, the set-theoretic fibers of $J \to \Quot$ are $0$-dimensional,
	hence finite (by  \cite[14.1.9]{ega41}).
	
	It follows immediately from Zariski's Main Theorem (\cite[4.4.9]{ega31}) that $J \to \Quot$ is an open immersion.  
	This proves the first part of the theorem.  To complete the proof, observe that flatness implies that
	 the generic fiber of $J_{\eta}$ is dense in $J$ (\cite[2.8.5]{ega42}). In particular, $J$
	is contained in the closure of $J_{\eta}$ in $\Quot$.  The generic fiber of $J$ coincides with the
	generic fiber of $\QuotT$, so the closure of this common scheme is contained in $\QuotT$
	when $\QuotT \subset \Quot$ is closed.  This completes the proof.
\end{proof}
\begin{remark}
	In Proposition~\ref{Prop: OpenImmersion}, we do \emph{not} assume that $J \subset \PicZero$ is an open subfunctor, but this condition holds in most cases of 
	interest.  When open, $J$ is automatically formally smooth and locally of finite presentation.  Thus, the key hypothesis in 
	the proposition is that $J$ is represented by a $S$-separated scheme.  A similar remark holds for
	Theorem~\ref{Thm: Main Theorem}; there the key hypotheses are that  $\bar{J}$ satisfies the valuative criteria of properness
	and that $J$ is representable.   Indeed, we do not even need to assume that $\bar{J}$ is representable.
\end{remark}

Under stronger assumptions, we can actually show that the natural map $J \to \QuotT$ is an isomorphism.
The essential point is to prove that $J$ satisfies the weak N\'{e}ron mapping property.  When $J$ 
can be embedded in a $S$-proper moduli space $\bar{J}$, this property holds provided that the local rings of $X$ are
factorial.  The content of this claim is that a line bundle on the generic fiber can specialize only to a line bundle on the
special fiber.  By localizing, the claim is equivalent to the following lemma, which is based on a proof from 
\cite{altman79} (p.~27 after ``Step XII").

\begin{lemma} \label{Lemma: LineBundleSpecialize}
	Suppose that $(R, \pi)$ is a discrete valuation ring and $R \to \calO$ a local, flat algebra extension
with $\calO$ noetherian.  Let $M$ be a $R$-flat,  finite $\calO$-module with the property 
that $M[\pi^{-1}]$ is free of rank $1$ and $M/\pi M$ is a rank $1$, pure module.  If $\calO$ is
factorial, then $M$ is free of rank $1$. 
\end{lemma}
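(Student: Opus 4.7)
The plan is to prove that, after choosing an embedding into $K := \mathrm{Frac}(\calO)$, the module $M$ becomes a divisorial ideal of $\calO$, which must then be principal because $\calO$ is a UFD. The central difficulty is translating the purity of $M/\pi M$ into the absence of embedded associated primes of the resulting ideal; the bridge is the submodule $(I : \pi)/I$.

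First I would show $M$ is $\calO$-torsion-free: any $\calO$-torsion element of $M$ dies in the free module $M[\pi^{-1}]$, hence is annihilated by some power of $\pi$, but $R$-flatness of $M$ makes $\pi$ a nonzerodivisor on $M$ and forces such an element to vanish. Being torsion-free of generic rank one, $M$ embeds in $K$ as a nonzero fractional ideal. Rescaling the embedding by an element of $K^{\times}$ so that $M[\pi^{-1}] = \calO[\pi^{-1}]$ inside $K$, and then multiplying by a power of $\pi$ to clear denominators, I may assume $M = I \subseteq \calO$ is an honest ideal with $\pi^{n} \in I$ for some $n \geq 0$. Because $\calO$ is a UFD, every nonzero divisorial (reflexive) ideal is principal, so it suffices to prove that every $\mathfrak{p} \in \mathrm{Ass}_{\calO}(\calO/I)$ has height one.

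Suppose for contradiction that some $\mathfrak{p} \in \mathrm{Ass}_{\calO}(\calO/I)$ has $\mathrm{ht}(\mathfrak{p}) \geq 2$. From $\pi^{n} \in I \subseteq \mathfrak{p}$ we get $\pi \in \mathfrak{p}$. Choose $a \in \calO \setminus I$ with $\mathfrak{p} = (I : a)$; then $\pi a \in I$, so the class $a + I$ is a nonzero element of the $\calO$-submodule $(I : \pi)/I \subseteq \calO/I$ and still has annihilator $\mathfrak{p}$. The key observation is that multiplication by $\pi$ defines an injective $\calO$-linear map
\[
(I : \pi)/I \hookrightarrow I/\pi I, \qquad a + I \longmapsto \pi a + \pi I,
\]
with trivial kernel because $\calO$ is a domain and $\pi \neq 0$, and the annihilator of $\pi a + \pi I$ in $I/\pi I$ is easily checked to be $\mathfrak{p}$ as well. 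Hence $\mathfrak{p} \in \mathrm{Ass}_{\calO}(I/\pi I)$, and since $I/\pi I$ is killed by $\pi$, the image $\bar{\mathfrak{p}} := \mathfrak{p}/\pi\calO$ lies in $\mathrm{Ass}_{\calO/\pi\calO}(I/\pi I)$. The purity hypothesis forces $\bar{\mathfrak{p}}$ to be a minimal prime of $\calO/\pi\calO$, so $\mathfrak{p}$ is minimal over $(\pi)$, and Krull's Principal Ideal Theorem gives $\mathrm{ht}(\mathfrak{p}) \leq 1$, a contradiction.

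Consequently $\mathrm{Ass}_{\calO}(\calO/I)$ consists of height-one primes, $I$ is divisorial, and factoriality of $\calO$ makes $I$ principal; hence $M \cong \calO$ is free of rank one. The hard step is the contradiction argument above, where one must extract information about $\mathrm{Ass}_{\calO}(\calO/I)$ from the purity of $I/\pi I$ through the bridge $(I:\pi)/I \hookrightarrow I/\pi I$; the rest---torsion-freeness, reduction to an honest ideal, and the UFD step from divisorial to principal---is formal.
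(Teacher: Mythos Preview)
Your proof is correct, and it takes a genuinely different route from the paper's.  Both arguments reduce to showing that $M$ can be realized as a principal ideal of $\calO$, but they diverge in how they use the purity hypothesis.

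The paper first invests the purity (and rank~$1$) hypothesis in \emph{constructing} the embedding: it builds $\phi \colon M \to \calO$ from a generically generating element of $M^{\vee}$ found by Prime Avoidance, and then uses purity of $\bar{M}$ to check that $\bar{\phi}$ is injective, so that the cokernel $\calO/I$ is $R$-flat.  The principality of $I$ is then deduced from the principality of $I[\pi^{-1}]$ together with this $R$-flatness and the Hauptidealsatz.  By contrast, you spend almost nothing on the embedding---torsion-freeness plus a normalization in $K$ suffices---and instead deploy purity \emph{after} the fact, via the injection $(I:\pi)/I \hookrightarrow I/\pi I$, to rule out embedded primes of $\calO/I$ and hence force $I$ to be divisorial.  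Your approach makes the mechanism by which purity controls the ideal more transparent, and it avoids the Prime Avoidance step entirely; the paper's approach has the mild advantage that it packages the output as ``$\calO/I$ is an $R$-flat family of ideals,'' which is conceptually pleasant even if not strictly needed.  One small point worth making explicit in your write-up: when you invoke purity to conclude that $\bar{\mathfrak{p}}$ is a \emph{minimal} prime of $\calO/\pi\calO$, you are implicitly using that $\operatorname{Supp}(I/\pi I)$ has full dimension in $\operatorname{Spec}(\calO/\pi\calO)$; this follows either from the rank~$1$ hypothesis on $\bar{M}$ or, more cheaply, from the fact that for each minimal prime $\mathfrak{p}$ over $(\pi)$ the localization $\calO_{\mathfrak{p}}$ is a DVR and $(I/\pi I)_{\mathfrak{p}} \ne 0$.
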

\begin{proof}
	We can certainly assume $\calO$ is not the zero ring.  
To ease notation, we write $\bar{M} := M / \pi M$ and $\bar{\calO} := \calO/ \pi \calO$.
It is enough to prove that $M$ is isomorphic to a height  $1$ ideal.
Indeed, such an ideal is principal by the factoriality assumption.

We argue by first showing that $M$ is isomorphic to an ideal of $\calO$.  Let 
$\bar{\mathfrak{p}}_1, \dots, \bar{\mathfrak{p}}_n$ be the minimal primes of $\bar{\calO}$
and $\mathfrak{p}_1, \dots, \mathfrak{p}_n$ the corresponding primes of $\calO$.
We have assumed that the stalk $\bar{M} \otimes k(\bar{\mathfrak{p}}_i)$ is 1-dimensional.  This stalk 
coincides with the stalk $M \otimes k(\mathfrak{p}_i)$, so we can conclude that
the localization $M_{\mathfrak{p}_i}$ is free of rank $1$ for $i = 1, \dots, n$.

We can also conclude that the same holds for the localizations of the dual module
$M^{\vee} := \operatorname{Hom}(M, \calO)$.  An application of the Prime 
Avoidance Lemma shows that there exists an element $\phi \in M^{\vee}$ that 
maps to a generator of $M^{\vee}_{\mathfrak{p}_i}$ for all $i$.  We will show that
$\phi \colon M \to \calO$ realizes $M$ as a $R$-flat family of ideals
(i.e., $\phi$ is injective with $R$-flat cokernel).

 	It is enough to 
show that the reduction $\bar{\phi} \colon \bar{M} \to \bar{\calO}$ 
is injective.  This map factors as 
\begin{displaymath}
	\bar{M} \to \oplus \bar{M}_{\bar{\mathfrak{p}}_{i}} \to \oplus \bar{\calO}_{\bar{\mathfrak{p}}_i}.
\end{displaymath}
The kernel of the left-most map is a submodule whose support does not contain
any generic point of a component of $\Spec(\calO)$, and thus must be zero
by pureness.  Furthermore, the right-most map is an isomorphism by construction.

	Now consider the ideal $I[\pi^{-1}]$ given by the image of $\phi[\pi^{-1}] \colon M[\pi^{-1}] \to \calO[\pi^{-1}]$.  
This is a principal ideal, hence is either the unit ideal or an ideal of height at most $1$ (Hauptidealsatz!).
By flatness, the same is true of the image $I$ of $\phi$.  In fact, $I$ cannot be a height zero ideal: 
the only such prime is the zero ideal, which does not satisfy the hypotheses.  Thus $I$ is either
the unit ideal or a height $1$ ideal.  In either case, $I$ must be principal, and the proof is complete.
\end{proof}

The factorial condition is important in what follows, so we record it as a hypothesis.
\begin{hypothesis} \label{Hyp: Factorial}
	We say a family of curves $f \colon X \to B$ over a Dedekind scheme satisfies Hypothesis~\ref{Hyp: Factorial} if the generic fiber $X_{\eta}$ is smooth and the local rings of $X_{S}$ are factorial for every strict henselization $S \to B$.
\end{hypothesis}
Observe Hypothesis~\ref{Hyp: Factorial} is satisfied when $X$ is regular and $X_{\eta}$ is smooth.  We now prove the main theorem of this paper.
\begin{theorem} \label{Thm: Main Theorem}  
	Let $f \colon X \to S$ be a family of curves and $\bar{J}$ a subfunctor of $\Sh$ such that
the generic fibers $\bar{J}_{\eta} =\Sh_{\eta}$ coincide.  Assume the line bundle locus $J \subset \bar{J}$
is represented by a smooth and finitely presented $S$-scheme.

	If $\bar{J}$ satisfies the valuative criteria of properness and $f$ satisfies Hypothesis~\ref{Hyp: Factorial},
	then $\QuotT$ is the N\'{e}ron model and $$J \subset \QuotT = \Neron$$ is an open
	subscheme that contains all the $k(0)$-valued points of $\QuotT$.  Furthermore, $$J = \QuotT = \Neron$$ 
	provided one of the following conditions hold:
	\begin{enumerate}
		\item $k(0)$ is algebraically closed;
		\item $J$ is stabilized by the identity component $\QuotO$.
	\end{enumerate}
\end{theorem}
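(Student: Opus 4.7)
The plan is to combine three ingredients: Proposition~\ref{Prop: OpenImmersion} to embed $J$ openly in $\QuotT$, a weak N\'{e}ron mapping property argument to identify $\QuotT$ with $\Neron$, and smoothness together with density of $k(0)$-points to handle the two special cases. I would begin by checking the hypotheses of Proposition~\ref{Prop: OpenImmersion}: separatedness of $J$ follows from the valuative criterion of separatedness applied to $\bar{J}$ together with openness of $J \subset \bar{J}$, while Hypothesis~\ref{Hyp: Factorial} implies that the local rings of $X$ are factorial and hence, by the discussion in Section~\ref{Sec: RaynaudReview}, that $\QuotT \subset \Quot$ is closed. Proposition~\ref{Prop: OpenImmersion} then yields an open immersion $J \hookrightarrow \QuotT$.

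The technical heart is showing the weak N\'{e}ron mapping property $\QuotT(S) \xrightarrow{\sim} \QuotT(\eta)$, which forces $\QuotT = \Neron$. Injectivity is $S$-separatedness. For surjectivity, I would take $p \in \QuotT(\eta) = \bar{J}(\eta)$, extend it to $\sigma \in \bar{J}(S)$ via the valuative criterion of properness for $\bar{J}$, and represent $\sigma$ by a sheaf $\calI \in \Sheaf(X)$ using Fact~\ref{Lemma: SectionsAreFamilies}. The crux is then to upgrade $\calI$ from a fiber-wise rank $1$ pure sheaf to a line bundle, and this is exactly where Lemma~\ref{Lemma: LineBundleSpecialize} applies: at each closed point $x \in X_0$, take $R$ to be the DVR below $S$, $\calO := \calO_{X,x}$ (factorial by Hypothesis~\ref{Hyp: Factorial}), and $M := \calI_x$. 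The hypotheses of the lemma hold because $\calI$ is $S$-flat, fiber-wise rank $1$ pure, and a line bundle on the generic fiber. Hence $\calI$ is locally free, $\sigma \in J(S) \subset \QuotT(S)$, and surjectivity (hence $\QuotT = \Neron$) follows; the same argument in fact shows $J(S) = \QuotT(S)$.

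For the final claims, smoothness of $\QuotT$ and Fact~\ref{Fact: SectionsExist} let every $q \in \QuotT(k(0))$ be lifted to an $S$-point, which lies in $J(S)$ by Step 2, so $q$ lies in $J$ because $J \hookrightarrow \QuotT$ is open. In case (1), every closed point of $\QuotT_0$ is $k(0)$-valued since $k(0)$ is algebraically closed, so the closed complement $\QuotT \setminus J$ is a closed subset with no closed points and is therefore empty. In case (2), stabilization by $\QuotO$ forces $J_0$ to be a union of connected components of $\QuotT_0$; Fact~\ref{Fact: SepPoints} guarantees that every nonempty component of the smooth $k(0)$-scheme $\QuotT_0$ contains a $k(0)$-point, which must lie in $J$, so every component is contained in $J$ and hence $J_0 = \QuotT_0$, giving $J = \QuotT$. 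I expect Step 2 to be the main obstacle: the rest is formal bookkeeping with representability, smoothness, and separatedness, but converting an abstract valuative extension in $\bar{J}$ into an honest line bundle is what genuinely uses Hypothesis~\ref{Hyp: Factorial} and the commutative algebra of Lemma~\ref{Lemma: LineBundleSpecialize}.
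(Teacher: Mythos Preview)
Your proposal is correct and follows essentially the same approach as the paper's proof: apply Proposition~\ref{Prop: OpenImmersion}, establish the weak N\'{e}ron mapping property for $J$ via the valuative criterion for $\bar{J}$ plus Fact~\ref{Lemma: SectionsAreFamilies} and Lemma~\ref{Lemma: LineBundleSpecialize}, then use Facts~\ref{Fact: SectionsExist} and~\ref{Fact: SepPoints} to handle $k(0)$-points and the two special cases. Two minor remarks: you explicitly verify that $J$ is $S$-separated (needed to invoke Proposition~\ref{Prop: OpenImmersion}), a point the paper's proof passes over in silence; and in case~(2) you phrase the argument as ``$J_0$ is a union of connected components of $\QuotT_0$,'' whereas the paper argues pointwise over $\bar{k}(0)$ by writing $x = y + (x-y)$ with $y \in J_0$ and $x-y \in \QuotO_0$ --- but these are the same argument in different clothing.
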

\begin{proof}
	By Proposition~\ref{Prop: OpenImmersion}, the natural map $J \to \Quot$ is 
	an open immersion with image contained in $\QuotT$.  Using this fact,
	we can prove that  $\QuotT$ is the N\'{e}ron model of its generic fiber.  Indeed, it is enough to 
	prove that $\QuotT$ satisfies the weak N\'{e}ron mapping property.  The open subscheme 
	$J \subset \QuotT$, in fact,  already satisfies this property.  Let $\sigma_{\eta} \in \QuotT(\eta) = J(\eta)$ be given.
  By properness, we can extend $\sigma_{\eta}$ to a section
$\sigma \in \bar{J}(S)$, and this element can be represented by 
a family $\calI$ of  pure, rank $1$ sheaves (by Fact~\ref{Lemma: SectionsAreFamilies}).
But every such family is a family of line bundles (Lemma~\ref{Lemma: LineBundleSpecialize}), and hence 
$\sigma$ lies in $J(S) \subset \bar{J}(S)$. In other words,  $J$ satisfies the weak N\'{e}ron mapping property.

	The weak N\'{e}ron mapping property of $J$ also implies that the image of
	$J$ contains all the $k(0)$-valued points of $\QuotT$.  Indeed, 
	every $k(0)$-valued point of $\QuotT$ is the specialization of a section
	by  Fact~\ref{Fact: SectionsExist}.  If $k(0)$ is algebraically closed,
	then we have shown that $J$ contains every $k(0)$-valued point
	of $\QuotT$, hence every closed point.  Thus, $J=\QuotT$,
	and there is nothing more to show.

	Let us now turn our attention to the case where $k(0)$ is only separably closed,
	but $J$ is stabilized by $\QuotO$.  Our goal is to show $J=\QuotT$, and to show this, 
	we pass to the special fiber $J_0 \to \QuotT_0$ and argue with points.
	Let $x$ be a $\bar{k}(0)$-valued point of $\QuotT$, where $\bar{k}(0)$ 
	is the algebraic closure of the residue field.  By density (Fact~\ref{Fact: SepPoints}),
	there exists a $\bar{k}(0)$-valued point $y$ in the image of $J_{0} \to \QuotT_0$ that lies in the
	same connected component as $x$.  We have $x = y + (x-y)$, which expresses $x$ as the sum 
	of a point of $\QuotO_0$ and a point of $J_{0}$.  The point $x$ must lie
	in $J_{0}$ by assumption.  This shows that the image of $J$ contains all of $\QuotT$, completing the proof.
\end{proof}

\begin{remark}
	The hypothesis that $J$ is stabilized by the identity component $\QuotO$ is perhaps unexpected,
	but it is often satisfied in practice.  The moduli space $\bar{J}$ is typically constructed by imposing numerical
	conditions on the multi-degree of a sheaf, and the multi-degree is invariant under the action of $\QuotO$
	(because the action is given by tensoring with a multi-degree $0$ line bundle).
\end{remark}

In the next section, we will show that certain moduli spaces
constructed in the literature satisfy the hypotheses 
of Theorem~\ref{Thm: Main Theorem}.  There are, however,
families of curves $f \colon X \to S$ with factorial local rings $\calO_{X,x}$ such that 
there does not exist a $S$-scheme $\bar{J}$ satisfying the conditions
 of the theorem.  Indeed, the family $f \colon X \to S$ in \cite[Ex.~9.2.3]{raynaud70} 
is a family of genus $1$ curves such that the local rings of $X$ are factorial
(even regular), but the natural map $\QuotT \to \Neron$ is not an isomorphism.
In particular, no $\bar{J}$ satisfying the hypotheses of Theorem~\ref{Thm: Main Theorem}
can exist.

\section{Applications} \label{Sec: Apps}
Here we apply Theorem~\ref{Thm: Main Theorem} to some families of moduli spaces from the literature
and then deduce consequences.  The two moduli spaces that we are interested in are the 
Esteves moduli space of quasi-stable sheaves (Sect.~\ref{Subsec: Esteves}) and the Simpson moduli space
of slope stable sheaves (Sect.~\ref{Subsec: Simpson}). In Section~\ref{Subsec: Genus1}, we discuss
the special case of families of genus $1$ curves, where suitable moduli spaces can be constructed explicitly.  

The moduli spaces we study are associated to a relatively projective family of curves.  We are primarily
interested in families over a Dedekind scheme with locally factorial total space, in which case projectivity
is automatic.  This fact is a consequence  of the generalized Chevalley Conjecture when the Dedekind 
scheme is defined over a field, but we do not know a reference in general.  For completeness, we prove:

\begin{proposition} \label{Prop: Projective}
	Let $f \colon X \to B$ be a family of curves over a Dedekind scheme.  If the local rings of $X$ are factorial, then
	$f$ is projective.
\end{proposition}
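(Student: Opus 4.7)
The plan is to exhibit a relatively ample line bundle on $X$. By the fiberwise criterion for $f$-ampleness, combined with the fact that a line bundle on a proper one-dimensional scheme over a field is ample iff it has positive degree on every irreducible component of the reduction, it suffices to produce an effective Cartier divisor $D \subset X$ whose associated line bundle has positive degree on every irreducible component of every fiber of $f$. The case $\dim B = 0$ is the classical statement that a proper curve over a field is projective, so I may assume $\dim B = 1$.

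Two preliminary facts drive the construction. First, the set $\Sigma \subset B$ of closed points $b$ with $X_b$ reducible is finite: since $f$ is proper, flat, and of finite presentation with geometrically irreducible generic fiber, the locus of $b \in B$ with $X_b$ geometrically irreducible is open in $B$ (by EGA IV-3, Thm.~12.2.4) and contains $\eta$, so its closed complement is a finite set of closed points of the Dedekind scheme $B$. Second, for every closed point $x \in X$ there exists a horizontal integral prime divisor $Z \subset X$ through $x$. Indeed, writing $b = f(x)$, the local ring $\mathcal{O}_{X,x}$ is a two-dimensional UFD in which the fiber $X_b$ is locally cut out by the pullback $\pi$ of a uniformizer of $\mathcal{O}_{B,b}$. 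Pick any $t \in \mathfrak{m}_x \setminus (\pi)$; the principal height-one ideal $(t)$ factors into principal primes, none containing $\pi$. Any such prime determines a codimension-one integral closed subscheme of $X$, which is Cartier by factoriality and, being proper over $B$ and not contained in any fiber, surjects onto $B$.

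With these facts in hand, the construction is immediate. Choose a horizontal prime divisor $Z_0$ through any closed point of $X$; for each $b \in \Sigma$ and each irreducible component $C$ of $X_b$, pick a closed point $x_{b,C}$ lying on $C$ alone and apply the lemma to obtain a horizontal prime divisor $Z_{b,C}$ through it. Set
\[
  D \;=\; Z_0 \;+\; \sum_{b \in \Sigma,\; C \subset X_b} Z_{b,C},
\]
a finite effective Cartier divisor on $X$. For an irreducible fiber $X_{b'}$, $Z_0$ surjects onto $B$, so $Z_0 \cap X_{b'}$ is a non-empty zero-dimensional subscheme and $\deg(\mathcal{O}_X(D)|_{X_{b'}}) > 0$. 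For a reducible fiber $X_b$ with components $C_i$, the intersection $Z_{b,C_i} \cap C_i$ contains $x_{b,C_i}$ and the remaining summands restrict to effective divisors on $C_i$, so $\deg(\mathcal{O}_X(D)|_{C_i}) > 0$ as well. Thus $\mathcal{O}_X(D)$ is $f$-ample and $f$ is projective. The main technical step is the horizontal-divisor lemma, which is the only place factoriality is used in an essential way; once it is in hand, finiteness of $\Sigma$ and the intersection-theoretic positivity check are routine.
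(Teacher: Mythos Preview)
Your argument is essentially the paper's: build a relatively ample line bundle by summing horizontal prime divisors through chosen points, using factoriality to guarantee the divisors are Cartier. The paper organizes the endgame differently (it makes $\mathcal{M}$ ample on one fiber, invokes openness of ampleness, and iterates over the finitely many remaining bad fibers, whereas you isolate the reducible fibers up front via EGA IV 12.2.4), but the key construction is the same.

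There is one genuine slip in your horizontal-divisor lemma. Choosing $t \in \mathfrak{m}_x \setminus (\pi)$ does \emph{not} force the prime factors of $t$ to avoid $\pi$. In $\mathcal{O}_{X,x}=k[[u,v]]$ with $\pi = uv$ (a node of the special fiber), the element $t=u$ lies in $\mathfrak{m}_x \setminus (\pi)$, yet $(t)=(u)$ visibly contains $\pi$; the resulting ``divisor'' is a component of the fiber, not horizontal. What you need is $t \in \mathfrak{m}_x$ lying in none of the finitely many height-one primes minimal over $(\pi)$ (equivalently, $t$ shares no irreducible factor with $\pi$). Prime Avoidance supplies such a $t$ because $\mathfrak{m}_x$ has height $2$. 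This is exactly the condition the paper imposes when it uses Prime Avoidance to pick $r$ vanishing at $x$ but not on any component of $X_{b_0}$. With this correction your proof goes through.
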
	
\begin{proof}
	This proof was explained to the author by Steven Kleiman.  Fix a closed point $b_0 \in B$.
	Given any component $F \subset X_{b_0}$, I claim that we can find a line bundle $\calL$ on $X$ that has non-negative
	degree on every component of every fiber  and strictly positive degree on $F$.
	
	Pick a closed point $x \in F$ and an open affine neighborhood $U \subset X$ of that point.  By the Prime Avoidance Lemma, we can find a regular 
	function $r \in H^{0}(U, \calO_{X})$ that does not vanish on any component of $X_{b_0}$ but does vanish at $x$.  Pick a component
	$D$ of the closure of $\{ r=0 \} \subset U$ in $X$.  Then $D$ is a Cartier divisor (by the factoriality assumption) that does not 
	contain any component of any fiber $X_b$ (by construction).  Furthermore, $D$ has non-trivial intersection with $F$.  
	The associated line bundle $\calL := \calO_{X}(D)$ has the desired positivity property.
	
	Now construct one such line bundle for every irreducible component $F$ of $X_{b_0}$ and define $\calM$ to be their tensor product.  The line bundle $\calM$ is nef on 
	every fiber and ample on $X_{b_0}$.  Ampleness is an open condition, so $\calM$ is in fact ample on all but finitely many fibers of $f$.  After repeating
	the construction for each such fiber and forming the tensor product, we have constructed a $f$-relatively ample line bundle on $X$.  This completes the proof.
\end{proof} 

We now turn our attention to the moduli spaces.

\subsection{Esteves Jacobians} \label{Subsec: Esteves}
We first discuss the Esteves  moduli space of quasi-stable sheaves.  This moduli 
space  fits very naturally into the framework of the previous section.

Suppose $B$ be a locally noetherian scheme and $f \colon X \to B$ 
a projective family of curves whose fibers are geometrically reduced.
Quasi-stability is  defined in terms of a section $\sigma \colon B \to X^{\text{sm}}$ 
and a vector bundle $\calE$ on $X$ with fiber-wise integral slope 
$\deg(\calE_b)/\operatorname{rank}(\calE_b)$, which
we assume is constant as a function of $b \in B$.  Given $\sigma$ and $\calE$,
$\sigma$-quasi-stability is a numerical condition on the  multi-degree of a rank $1$, torsion-free sheaf of degree
\begin{displaymath} \label{Eqn: EstevesDegree}
	d(\calE) = d :=  -\chi(\calO_{X_b}) -\deg(\calE_{b})/\operatorname{rank}(\calE_{b}).
\end{displaymath}
For the definitions (which we will not use), we direct the reader to \cite[p.~3051]{esteves} (for a single sheaf)
and \cite[p.~3054]{esteves} (for a family).  The basic existence theorem is Theorem~A (stated on \cite[p.~3047]{esteves};
 proved in \cite[Sect.~4]{esteves}).  It states that if  $\Sheaf_{\calE}^{\sigma} \colon 
\text{$B$--$\Sch$} \to \Sets$ is the functor defined by setting $\Sheaf_{\calE}^{\sigma}(T)$ equal to the
set of isomorphism classes of $\calO_{T}$-flat, finitely presented $\calO_{X_T}$-modules
that are fiber-wise $\sigma$-quasi-stable, then there is a $B$-proper algebraic space
$\bar{J}_{\calE}^{\sigma} \to B$ of finite type that represents the \'{e}tale sheaf associated to
$\Sheaf_{\calE}^{\sigma}$. 

Strictly speaking, our definition differs from the one given in \cite{esteves} in two ways.  First,
in \cite{esteves} the author does not work with isomorphism classes of sheaves but rather with equivalence classes
under the relation given by identifying  two sheaves $\calI_1$ and $\calI_2$ on $X_{T}$ when $\calI_1$  is isomorphic
to $\calI_2 \otimes f^{*}(\calL)$ for some line bundle $\calL$ on $T$.  Zariski locally on $T$
the sheaves $\calI_1$ and $\calI_2$ are isomorphic, and it follows that the \'{e}tale
sheaf associated to $\Sheaf_{\calE}^{\sigma}$ is canonically isomorphic to the
sheaf considered in \cite{esteves}.  Second, Esteves only defines his functor 
as a functor from locally noetherian schemes to sets.  However, the functor 
$\Sheaf_{\calE}^{\sigma}$ and its associated \'{e}tale sheaf are easily seen to be
locally finitely presented.  It follows that $\bar{J}_{\calE}^{\sigma}$ represents the
\'{e}tale sheaf associated to $\Sheaf_{\calE}^{\sigma}$, rather than just the restriction of 
this sheaf  to locally noetherian schemes.

If $f$ satisfies stronger conditions, then the space $\bar{J}_{\calE}^{\sigma}$ is actually a scheme.
This is the content of Theorem~B (stated on \cite[p.~3048]{esteves}; proved on \cite[p.~3086]{esteves}).
The theorem states that if there exist sections $\sigma_1, \dots, \sigma_n \colon B \to X^{\text{sm}}$
of $f$ with the property that every irreducible component
of a fiber $X_b$ is geometrically integral and contains one of the points $\sigma_1(b), \dots, \sigma_n(b)$,
then $\bar{J}^{\sigma}_{\calE}$ is a scheme.

In the special case where $B = S$ is a strict henselian discrete valuation ring, the hypotheses
of Theorem~B are automatically satisfied.  Indeed, the locus of $k(0)$-valued
points is dense in the smooth locus $X_0^{\text{sm}}$ (Fact~\ref{Fact: SepPoints}),
which in turn is dense in $X_0$ as $X_0$ is geometrically reduced.    We may
conclude that  the irreducible components of $X_0$ are geometrically integral.
Finally, every $k(0)$-valued point of $X_0$ extends to a section 
$\sigma \colon S \to X$ (Fact~\ref{Fact: SectionsExist}), so the hypotheses
to Theorem~B are certainly satisfied.
 
We call $\bar{J}_{\calE}^{\sigma}$ the \textbf{Esteves compactified Jacobian}.  Inside of
the Esteves compactified Jacobian, we can consider the open subscheme parameterizing line bundles.
This scheme is called the \textbf{Esteves Jacobian} and denoted by $J_{\calE}^{\sigma}$.  While the scheme 
$\bar{J}_{\calE}^{\sigma}$ parameterizes sheaves, it is not naturally
a subfunctor of $\Sh$ because it does not parameterize degree $0$ sheaves.  We can, 
however, define a natural transformation $\bar{J}_{\calE}^{\sigma} \to \Sh$ by the rule
\begin{displaymath}
	\calI \mapsto \calI(-d \cdot  \sigma)
\end{displaymath}
Both Proposition~\ref{Prop: OpenImmersion} and  Theorem~\ref{Thm: Main Theorem} apply to 
 $\bar{J}_{\calE}^{\sigma}$.
\begin{corollary} \label{Cor: EstevesJacDedekind}
	Fix a Dedekind scheme $B$.  Let $f \colon X \to B$ be a projective family of geometrically reduced curves.  Let 
	$\sigma \colon B \to X^{\text{sm}}$ be a section and $\calE$  a vector bundle on $X$ with fiber-wise integral slope.
	
	Then the natural map  $J_{\calE}^{\sigma} \to \Quot$ is an open immersion.  Assume further:
	\begin{itemize}
		\item $f$ satisfies Hypothesis~\ref{Hyp: Factorial}.
	\end{itemize}
	Then  $J_{\calE}^{\sigma} = \QuotT$, and  this scheme is the N\'{e}ron model.
\end{corollary}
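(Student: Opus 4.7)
The plan is to deduce the corollary by applying Proposition~\ref{Prop: OpenImmersion} and Theorem~\ref{Thm: Main Theorem} to $\bar{J} := \bar{J}^{\sigma}_{\calE}$ and its line bundle locus $J := J^{\sigma}_{\calE}$, regarded as subfunctors of $\Sh$ through the twist $\calI \mapsto \calI(-d\sigma)$ defined in the paragraph preceding the corollary. Whether a morphism of $B$-schemes is an open immersion or an isomorphism is a local property on $B$ for the faithfully flat topology, and all relevant constructions ($\Quot$, $\QuotT$, and $\bar{J}^{\sigma}_{\calE}$) commute with flat base change, so I first reduce to the case where $B = S$ is the spectrum of a strict henselian discrete valuation ring by strict-henselizing at each closed point of $B$.

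Next I verify the hypotheses of Proposition~\ref{Prop: OpenImmersion}. As noted in the text preceding the corollary, over a strict henselian base the extra hypotheses of Esteves' Theorem~B are automatic, so $\bar{J}$ is represented by an $S$-proper, finite-type $S$-scheme, whence $J$ is an open, finitely presented, separated $S$-subscheme of $\bar{J}$. Smoothness of $J$ follows from the formal $S$-smoothness of $\PicZero$ recorded in Section~\ref{Sec: RaynaudReview}, since $J$ is open in $\PicZero$. The generic fibers match because $X_{\eta}$ is smooth and geometrically irreducible, so every rank-one torsion-free sheaf on $X_{\eta}$ is a line bundle and $\sigma$-quasi-stability is vacuous; after the degree-normalizing twist, $\bar{J}_{\eta}$ equals $\Sh_{\eta}$. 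Proposition~\ref{Prop: OpenImmersion} then yields that $J^{\sigma}_{\calE} \to \Quot$ is an open immersion, proving the first assertion; its second part places the image inside $\QuotT$ once Hypothesis~\ref{Hyp: Factorial} is imposed.

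For the equality $J = \QuotT = \Neron$, I invoke Theorem~\ref{Thm: Main Theorem}. Its valuative-criteria hypothesis on $\bar{J}$ follows from $S$-properness, and its factoriality hypothesis is Hypothesis~\ref{Hyp: Factorial}. Because $k(0)$ is only separably closed (not algebraically closed) in general, I use alternative (2) of the theorem and check that $J$ is stabilized by $\QuotO$; this is exactly the content of the remark following Theorem~\ref{Thm: Main Theorem}, since $\sigma$-quasi-stability is a condition on the multi-degree, the $\QuotO$-action is by tensoring with multi-degree-$0$ line bundles, and this action preserves both the multi-degree and the property of being a line bundle. Theorem~\ref{Thm: Main Theorem}(2) then delivers the equality. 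The only step requiring any genuine care—and hence the main potential obstacle—is the translation from Esteves' formalism to the $\Sh$-centric setup: one must confirm that the twist $\calI \mapsto \calI(-d\sigma)$ really does realize $\bar{J}^{\sigma}_{\calE}$ as a subfunctor of $\Sh$ whose generic fiber exhausts $\Sh_{\eta}$. Once this dictionary is in place, every other step reduces to invoking results already established earlier in the paper.
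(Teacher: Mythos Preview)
Your proposal is correct and follows essentially the same route as the paper's proof: reduce to the strict henselian case, verify the hypotheses of Proposition~\ref{Prop: OpenImmersion} (representability and separatedness from Esteves' Theorems~A/B, smoothness from $\PicZero$), and then invoke Theorem~\ref{Thm: Main Theorem}(2) via the observation that tensoring by multi-degree-$0$ line bundles preserves $\sigma$-quasi-stability. One small caveat: for the first assertion (the open immersion) you invoke smoothness of $X_\eta$, but Hypothesis~\ref{Hyp: Factorial} is not yet in force there; the needed equality $J_\eta = \PicZero_\eta$ already follows from the standing convention that $X_\eta$ is geometrically \emph{integral}, which makes quasi-stability vacuous on the generic fiber.
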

\begin{proof}
	By localizing, we can assume that $B=S$ is a strict henselian discrete valuation ring, in which case we
	are reduced to proving that the hypotheses of Proposition~\ref{Prop: OpenImmersion} and  Theorem~\ref{Thm: Main Theorem} 
	hold.  The scheme $J_{\calE}^{\sigma}$ is easily seen to be formally $S$-smooth.  
	 Indeed, $\sigma$-quasi-stability is a condition on fibers, so the formal smoothness
	 of $\PicZero$ implies the formal smoothness of $J_{\calE}^{\sigma}$.  The remaining hypotheses
	 of Proposition~\ref{Prop: OpenImmersion} are explicitly assumed, so we can deduce the first
	 part of the theorem.  To complete the proof, it is enough to show that $J_{\calE}^{\sigma}$
	 is stabilized by $\QuotO$.  But the action of $\QuotO$ on $J_{\calE}^{\sigma}$ is given by 
	 the tensor product against a multi-degree $0$ line bundle, so this action preserves 
	 multi-degree and hence $\sigma$-quasi-stability.
\end{proof}

Corollary~\ref{Cor: EstevesJacDedekind} implies that $J^{\sigma}_{\calE}$ is a scheme with (unique) $B$-group scheme structure that extends the group scheme structure of the generic fiber.   It is not immediate from the definition that $J^{\sigma}_{\calE}$
admits such structure, and Example~\ref{Example} shows that the group structure is special to the case of families over a
$1$-dimensional base.  The result also implies uniqueness results for the Esteves Jacobian; if $\sigma' \colon B \to X^{\text{sm}}$ 
is a second section and $\calE'$ a second vector bundle on $X$, then $J^{\sigma'}_{\calE'}$ is canonically isomorphic to $J^{\sigma}_{\calE}$.  In the next section, we will define the Simpson stable Jacobian $J_{\calL}^{0}(X)$, and this scheme is also isomorphic to $J^{\sigma}_{\calE}$ provided every slope semi-stable sheaf is stable.  It would be interesting to know if these isomorphisms extend to the compactifications. Important results along these lines can be found in \cite{viviani10} and \cite{esteves09}, but  many basic question remain unanswered.  Currently, there is not a single example of a curve $X_0 \to \Spec(k)$ such that two Esteves compactified Jacobians associated to $X_0$ are non-isomorphic.

\subsection{Simpson Jacobians} \label{Subsec: Simpson}
The hypotheses to Proposition~\ref{Prop: OpenImmersion} and 
Theorem~\ref{Thm: Main Theorem} are satisfied by certain moduli spaces of stable sheaves, which we call Simpson Jacobians.
Here we recall Simpson's construction, along with later work of Langer and Maruyama, and then apply results from Section~\ref{Sec: MainThm}.   
We restrict our attention to families of reduced curves (but see  Remark~\ref{Remark: OpenClosedFail}, and 
the discussion preceding Example~\ref{Example}).

We work over a scheme $B$ that is finitely generated over a universally Japanese ring $R$ (e.g.
$R = \mathbb{C}, \mathbb{F}_p, \mathbb{Z},...$).  Let $f \colon X \to B$  a family of curves
with  $f$-relatively  ample line bundle $\calL$, and assume the
  Euler--Poincar\'{e} characteristics $\chi(\calO_{X_b})$ and $\chi(\calL_b)$ 
are constant as functions of the base $B$.  Set $P_{d}$ equal to the polynomial
\begin{equation} \label{Eqn: HilbPoly}
	P_{d}(t) := \deg(\calL_b) \cdot t +d+ \chi,
\end{equation}
where  $\chi$ is the Euler--Poincar\'{e} characteristic of a fiber of $f$ and $\deg(\calL_b)$ is the degree
of the restriction of $\calL$ to a fiber.  This is the Hilbert polynomial of a degree $d$ line bundle.

Given this data, Simpson constructed an associated moduli space provided $R=\mathbb{C}$.  The Simpson moduli space $\operatorname{M}(\calO_{X}, P_d)$ parameterizes slope semi-stable sheaves with Hilbert
polynomial $P_d$.  (See \cite[p.~54--56]{simpson94} for the definition of semi-stability).  To be precise, define 
$\operatorname{M}^{\sharp}(\calO_{X}, P_d)$ to be the functor whose $T$-valued points are isomorphism classes
of $\calO_{T}$-flat, finitely presented $\calO_{X_{T}}$-modules whose fibers are $\calL$-slope semi-stable sheaves
with Hilbert polynomial $P_d$.  The main existence result (\cite[Thm.~1.21]{simpson94}) asserts that there is a projective scheme 
$\operatorname{M}(\calO_{X}, P_{d})$ that corepresents $\operatorname{M}^{\sharp}(\calO_{X}, P_d)$.  Inside of 
$\operatorname{M}(\calO_{X}, P_{d})$, we may consider the open subscheme 
$\operatorname{M}^{\text{st}}(\calO_{X}, P_{d})$ parameterizing $\calL$-slope stable sheaves.  The stable locus is a 
fine moduli space: its $\mathbb{C}$-valued points are in natural bijection with the isomorphism classes of $\calL$-slope stable
sheaves with Hilbert polynomial $P_{d}$, and  \'{e}tale locally on $\operatorname{M}^{\text{st}}(\calO_{X}, P_{d})$,
the product $X \times_{B} \operatorname{M}^{\text{st}}(\calO_{X}, P_{d})$ admits a universal family of sheaves.  The reader may check that
these conditions are equivalent to the condition that $\operatorname{M}^{\text{st}}(\calO_{X}, P_{d})$ represent the \'{e}tale sheaf associated the 
functor parameterizing stable sheaves.  While Simpson only considers the case $R = \mathbb{C}$, later 
work of Langer (\cite[Thm.~4.1]{langer04a}, \cite[Thm.~0.2]{langer04b})
and Maruyama (\cite{maruyama}) extends these results to the case where $R$ is an arbitrary universally Japanese ring.

Let us now specialize to the case where $B$ is a Dedekind scheme.  When $f$ has reducible fibers, $\operatorname{M}^{\text{st}}(\calO_{X}, P_{d})$ may contain points corresponding to sheaves that are not rank $1$ (see \cite[Ex.~2.2]{lopez05}).  This is potentially a major source of confusion: the term ``rank" is used in a different way in \cite{simpson94},
and the sheaves parameterized by $\operatorname{M}^{\text{st}}(\calO_{X}, P_{d})$ are rank $1$ in Simpson's sense but not necessary in the sense
used here.  

We avoid these sheaves.  Define the \textbf{Simpson stable Jacobian}  $J_{\calL}^{d}$ of degree $d$ to be the locus of 
stable line bundles in $\operatorname{M}^{\text{st}}(\calO_{X}, P_{d})$ (which is an open subscheme by \cite[Lemma~5.12(a)]{altman80}).  We define the \textbf{Simpson stable compactified Jacobian} $\bar{J}_{\calL}^{d}$ to be the subset of  
the stable locus $\operatorname{M}^{\text{st}}(\calO_{X}, P_{d})$ that corresponds to  pure, rank $1$ sheaves. (Warning: the compactified Jacobian is a $B$-proper scheme when 
every semi-stable pure sheaf with Hilbert polynomial $P_{d}$ is stable but not in general!)  

When the fibers of $X \to B$ are \emph{geometrically reduced}, a minor modification of the proof of \cite[Lemma~8.1.1]{pand} shows that the subset $\bar{J}_{\calL}^{d} \subset \operatorname{M}^{\text{st}}(\calO_{X}, P_{d})$ is closed and open, and  hence  has a natural scheme structure.  We record this.

\begin{lemma} \label{Lemma: OpenAndClosed}
	Assume the fibers of $f \colon X \to B$ are geometrically reduced.  Then the subset $\bar{J}_{\calL}^{d}$ is closed and open in $\operatorname{M}^{\text{st}}(\calO_{X}, P_{d})$.
\end{lemma}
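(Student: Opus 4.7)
The plan is to test openness and closedness by considering flat families of stable sheaves $\calI$ over a DVR $T = \Spec R$ (with generic point $\eta$ and special point $s$), where $\calI$ lives on $X_T := X \times_B T$. The proof will rest on a numerical dichotomy plus a support-preservation argument, with the two directions handled separately.

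The numerical dichotomy is immediate from the Hilbert polynomial. If $\calI_t$ is a pure sheaf with Hilbert polynomial $P_d$ on the fiber $X_t$, whose geometric components are $X_{t,1}, \dots, X_{t,n}$, and if $\calI_t$ has multi-rank $(r_1, \dots, r_n)$ at those components, then the leading coefficient of the Hilbert polynomial equals $\sum_i r_i \deg(\calL|_{X_{t,i}})$. Comparing with the leading coefficient of $P_d$, which is $\deg(\calL_t) = \sum_i \deg(\calL|_{X_{t,i}})$, gives $\sum_i (r_i - 1) \deg(\calL|_{X_{t,i}}) = 0$. Positivity of each $\deg(\calL|_{X_{t,i}})$ (from $f$-relative ampleness of $\calL$) forces either $(r_1, \dots, r_n) = (1, \dots, 1)$ or some $r_i = 0$. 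Consequently, being in $\bar{J}_{\calL}^{d}$ is equivalent to the scheme-theoretic assertion $\operatorname{Supp}(\calI_t) = X_t$.

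For closedness, the plan is to observe that if $\calI_\eta$ has multi-rank $(1, \dots, 1)$, then $\operatorname{Supp}(\calI_\eta) = X_\eta$, so the closed subscheme $\operatorname{Supp}(\calI) \subset X_T$ contains $X_\eta$. Since $X_\eta$ is scheme-theoretically dense in $X_T$ by $T$-flatness of $X_T \to T$, we get $\operatorname{Supp}(\calI) = X_T$, and hence $\operatorname{Supp}(\calI_s) = X_s$; the dichotomy then forces multi-rank $(1, \dots, 1)$ on the special fiber. For openness, I would argue in the reverse direction: if $\calI_\eta$ is not rank $1$, it is supported on a proper union $Y_\eta = \bigcup_{i \in I} X_{\eta,i}$ of components. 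A $\pi$-torsion-free ($T$-flat) sheaf cannot have ``extra'' special-fiber support beyond the closure of its generic support (a Nakayama-style argument), so $\calI$ is supported on $\overline{Y_\eta} \subset X_T$, and $\operatorname{Supp}(\calI_s) \subseteq \overline{Y_\eta} \cap X_s$, which is a proper subcurve of $X_s$. Hence $\calI_s$ also fails to have full support.

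The main technical obstacle is the last step of the openness argument, namely verifying that $\overline{Y_\eta} \cap X_s$ is a proper subcurve of $X_s$. This is where the geometrically reduced hypothesis enters essentially: via Zariski's theorem on principal components for flat proper morphisms with geometrically reduced fibers, the irreducible components of $X_T$ restrict to unions of components of the fibers in a way compatible with specialization, so that omitting a component of $X_\eta$ guarantees an omitted component of $X_s$. Keeping careful track of this component correspondence constitutes the ``minor modification'' of Pandharipande's argument in \cite[Lemma 8.1.1]{pand} that the lemma alludes to.
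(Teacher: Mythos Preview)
Your outline is correct and close in spirit to the paper's proof---both reduce to a DVR and exploit the leading term of the Hilbert polynomial---but there is one omission and one unnecessary detour.

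The omission: you jump directly to the valuative test, but stability under specialization and generalization only yields open-and-closedness once the locus is known to be constructible. The paper addresses this first, observing that rank~$1$ is the condition that the restriction of the universal sheaf to the relative smooth locus $X^{\text{sm}}$ be a line bundle, and then invoking \cite[9.4.7]{ega43}. You should insert this step before passing to DVRs.

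The detour concerns your openness argument. The paper does not track components or supports at all. Since the fibers are reduced, the length $\ell_{x}(I)$ of $I$ at a generic point $x$ of a fiber is simply the fiber dimension of $I$ there, which is upper semi-continuous on $X_T$. Combined with the identity $\sum_i \deg_{x_i}(\calL)\,\ell_{x_i}(I) = \deg(\calL)$ on each fiber, semi-continuity forces all generic ranks to equal~$1$ on one fiber as soon as they do on the other, symmetrically in both directions. Your support argument is also valid, but its last step needs no ``Zariski's theorem on principal components'': the scheme-theoretic closure $\overline{Y_\eta}$ is $T$-flat, so its special fiber has $\calL$-degree equal to $\deg_{\calL}(Y_\eta) < \deg_{\calL}(X_s)$; since $X_s$ is reduced, set-theoretic equality $(\overline{Y_\eta})_s = X_s$ would force the scheme-theoretic containment $X_s \subset (\overline{Y_\eta})_s$ and hence the wrong degree inequality. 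That is exactly where reducedness enters for you, just as it enters for the paper via the identification of lengths with fiber dimensions. The paper's route is shorter and avoids the asymmetry between your two directions; yours is more geometric but you have overestimated the difficulty of the final step.
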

\begin{proof}
	The main point to prove is that a $1$-parameter family of line bundles cannot specialize 
	to a pure sheaf that fails to have rank $1$, and this is shown by examining the leading
	term of the Hilbert polynomial. To begin, we may cover
	$\operatorname{M}^{\text{st}}(\calO_{X}, P_{d})$ by \'{e}tale morphisms 
	$\operatorname{M} \to \operatorname{M}^{\text{st}}(\calO_{X}, P_{d})$ 
	with the property that a universal family $\calI_{\text{uni.}}$ on $\operatorname{M} \times_{B} X$
	exists. It is enough to 
	verify the claim after passing from $\operatorname{M}^{\text{st}}(\calO_{X}, P_{d})$
	to an arbitrary such  scheme,
	and so for the remainder of the proof we work with $\operatorname{M}$ in place of 
	$\operatorname{M}^{\text{st}}(\calO_{X}, P_{d})$. We will also abuse notation by denoting
	the pullback of 	$\bar{J}_{\calL}^{d}$ under $\operatorname{M} \to \operatorname{M}^{\text{st}}(\calO_{X}, P_{d})$
	by the same symbol $\bar{J}_{\calL}^{d}$.

	We first need to check that $\bar{J}^{d}_{\calL} \subset \operatorname{M}$ is constructible, so that
	we can make use of the valuative criteria.  Given $m \in M$ mapping to $b \in B$, the condition that the fiber $I_m$ is 
	rank $1$ is just the condition that the restriction of $I_m$ to $X_{b}^{\text{sm}}$ is a line bundle.  Constructibility thus
	follows from \cite[9.4.7]{ega43} applied to $\operatorname{M} \times_{B} X^{\text{sm}} \to \operatorname{M}$.
	
	To complete the proof, it is enough to prove that $\bar{J}_{\calL}^{d}$ is closed under specialization and generalization.
	Thus, we pass from $\operatorname{M}$ to a discrete valuation ring $T$ mapping to $\operatorname{M}$.   If
	$\calI$ is the sheaf  on $X_{T}$ given by the pulback of the universal family, then we need to show
	that the generic fiber of $I_\eta$ is rank $1$ if and only if the special fiber $I_0$ is.
	
	Consider the leading term of the Hilbert polynomial of these sheaves.  On one hand, we assumed that
	the leading term is $\deg(\calL_0)$.  On the other hand, this term can be computed in terms of the generic
	ranks of the sheaves (using \cite[2.5.1]{altman79}).  Suppose that $x_1, \dots, x_n$ are the generic points of the special fiber and 
	$y_1, \dots, y_m$ are the generic points of the generic fiber.  We write $\ell_{x_i}(I_0)$ (resp. $\ell_{y_i}(I_{\eta})$)
	for the length of the stalk of $I_0$ at $x_i$ (resp. $I_{\eta}$ at $y_i$).
	The leading term of the common Hilbert polynomial can be expressed as:
	\begin{align} \label{Eqn: LeadingTerm}
		\deg(\calL_0) =& \deg_{x_1}(\calL_0) \cdot \ell_{x_1}(I_0) + \dots + \deg_{x_n}(\calL_0) \cdot  \ell_{x_n}(I_0) \\
				=&  \deg_{y_1}(\calL_\eta) \cdot  \ell_{y_1}(I_\eta) + \dots + \deg_{y_m}(\calL_\eta) \cdot  \ell_{y_m}(I_\eta) \label{Eqn: LeadingTermLineTwo}
	\end{align}
	The degree $\deg(\calL_0)$ is also the sum of the partial degrees $\deg_{x_i}(\calL_0)$, so Eq.~\eqref{Eqn: LeadingTerm} remains
	valid if we replace every $\ell_{x_i}(I_0)$ with $1$, and similarly for Eq.~\eqref{Eqn: LeadingTermLineTwo}.
	
	Because the fibers are reduced, the lengths $\ell_{x_i}(I_0)$ and $\ell_{y_j}(I_{\eta})$ are just the dimension of the stalk
	of $I_0$ at $x_i$ and the stalk of $I_{\eta}$ at $y_j$ respectively.  In particular, these
	numbers are upper semi-continuous.  
	
	Suppose first that $I_0$ is rank $1$.  By semi-continuity, we have $\ell_{y_i}(I_{\eta}) \le 1$.
	If some inequality was strict, say $\ell_{y_1}(I_{\eta})=0$, then
	\begin{align*}
		\deg(\calL_{0}) =& \deg_{y_1}(\calL_\eta) + \deg_{y_2}(\calL_\eta)  + \dots + \deg_{y_m}(\calL_\eta) \\
						>& \phantom{\deg_{y_1}(\calL_\eta) + } \deg_{y_2}(\calL_\eta)  + \dots + \deg_{y_m}(\calL_\eta) \\
					\ge& \deg_{y_1}(\calL_\eta) \cdot  \ell_{y_1}(I_\eta) + \dots + \deg_{y_m}(\calL_\eta) \cdot  \ell_{y_m}(I_\eta) \\
					=& \deg(\calL_{0}).
	\end{align*}
	This is absurd!  Thus, we must have $\ell_{y_i}(I_{\eta}) = 1$ for all $y_i$ and $I_{\eta}$ is rank $1$.  Similar reasoning shows
	that if $I_{\eta}$ is rank $1$, then $I_0$ is rank $1$.
\end{proof}

\begin{remark} \label{Remark: OpenClosedFail}
	The hypothesis that the fibers of $f$ are geometrically reduced is necessary.  Indeed, the moduli space $\operatorname{M}^{\text{st}}(\calO_{X}, P_d)$
	was described in \cite{dawei} in the case that $X$ is a non-reduced curve whose reduced subscheme $X_{\text{red}}$ is smooth and whose nilradical $\calN$ is square-zero (i.e. $X$ is a ribbon).
	Using that description it is easy to produce examples where $\bar{J}_{\calL}^{d} \subset \operatorname{M}^{\text{st}}(\calO_{X}, P_d)$ is not closed (e.g. take $d$ equal to $0$, $X$ to have
	even genus, and $X_{\text{red}}$ to have genus $1$).  The points of the complement in the closure correspond to stable vector bundles on $X_{\text{red}}$.
\end{remark}

We now apply Proposition~\ref{Prop: OpenImmersion} and Theorem~\ref{Thm: Main Theorem} to the Simpson
Jacobians. 

\begin{corollary} \label{Cor: SimpIsNeron}
	Fix a Dedekind scheme that is finitely generated over a universally Japanese ring.  Let $f \colon X \to B$ 
	be a family of geometrically reduced curves.  Let $\calL$ be  $f$-relatively ample line bundle.
	
	Then the natural map $J^{0}_{\calL}(X) \to \QuotT$ is an open immersion.  Assume further that both of the following conditions hold:
	\begin{itemize}
		\item every $\calL$-slope semi-stable rank $1$, torsion free sheaf of degree $0$ is $\calL$-slope stable;
		\item $f$ satisfies Hypothesis~\ref{Hyp: Factorial}.
	\end{itemize}
	Then $J^{0}_{\calL}(X)=\QuotT$ and this scheme is the N\'{e}ron model.
\end{corollary}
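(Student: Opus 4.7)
The plan is to follow the same template as the proof of Corollary~\ref{Cor: EstevesJacDedekind}: localize to reduce to the strict henselian case, then verify the hypotheses of Proposition~\ref{Prop: OpenImmersion} and Theorem~\ref{Thm: Main Theorem}. Specifically, after passing to a strict henselization $S \to B$, the scheme $J^{0}_{\calL}$ becomes a subfunctor of $\PicZero$, and $\bar{J}^{0}_{\calL}$ becomes a subfunctor of $\Sh$ via the tautological inclusion (no twist is needed since $d=0$). The generic fibers agree with $\PicZero_{\eta}$ and $\Sh_{\eta}$ respectively because on the smooth generic fiber every rank $1$, torsion-free sheaf is a line bundle, and every line bundle of degree $0$ is automatically $\calL$-slope semi-stable (and hence stable by the hypothesis).

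First I would verify that $J^{0}_{\calL}$ is represented by a smooth, finitely presented $S$-scheme. Finite presentation is immediate from the construction recalled before the corollary (Simpson--Langer--Maruyama), and formal smoothness follows exactly as in the Esteves case: $\calL$-slope stability is a fibral numerical condition on the multi-degree, so the formal smoothness of $\PicZero$ passes to $J^{0}_{\calL}$. This yields the first assertion of the corollary by a direct application of Proposition~\ref{Prop: OpenImmersion}.

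For the second assertion, I need $\bar{J}^{0}_{\calL}$ to satisfy the valuative criterion of properness. Under the assumption that every $\calL$-slope semi-stable rank $1$, torsion-free sheaf of degree $0$ is $\calL$-slope stable, the compactified Jacobian $\bar{J}^{0}_{\calL}$ equals the intersection of the full Simpson moduli space $\operatorname{M}(\calO_X, P_0)$ with the locus of pure rank $1$ sheaves; by Lemma~\ref{Lemma: OpenAndClosed}, this intersection is open and closed in $\operatorname{M}^{\text{st}}(\calO_X, P_0) = \operatorname{M}(\calO_X, P_0)$, and the latter is projective (hence proper) over $B$. In particular $\bar{J}^{0}_{\calL} \to B$ is proper, so it satisfies the valuative criterion. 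Together with Hypothesis~\ref{Hyp: Factorial}, this lets me invoke Theorem~\ref{Thm: Main Theorem}.

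To conclude $J^{0}_{\calL} = \QuotT$ rather than only an open inclusion, I need to check one of the extra conditions in Theorem~\ref{Thm: Main Theorem}. I will verify that $J^{0}_{\calL}$ is stabilized by the identity component $\QuotO$: the action of $\QuotO$ is given by tensoring against a multi-degree $0$ line bundle, which preserves the multi-degree of any rank $1$ sheaf and therefore preserves $\calL$-slope stability. This gives $J^{0}_{\calL} = \QuotT$, and Theorem~\ref{Thm: Main Theorem} then identifies this scheme with the N\'{e}ron model. The main subtlety I anticipate is the properness step; making sure that the hypothesis ``every semi-stable is stable'' is actually used to promote the a~priori only open-and-closed $\bar{J}^{0}_{\calL}$ inside $\operatorname{M}^{\text{st}}(\calO_X, P_0)$ to an honestly $B$-proper scheme is what allows Lemma~\ref{Lemma: LineBundleSpecialize} to be applied in the proof of Theorem~\ref{Thm: Main Theorem}.
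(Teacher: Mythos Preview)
Your approach mirrors the paper's: reduce to the strict henselian case, check the hypotheses of Proposition~\ref{Prop: OpenImmersion} and Theorem~\ref{Thm: Main Theorem}, and verify stabilization by $\QuotO$ exactly as in the Esteves case. There is, however, a gap in the properness step. You assert that the hypothesis ``every $\calL$-slope semi-stable rank $1$, torsion-free sheaf of degree $0$ is $\calL$-slope stable'' gives $\operatorname{M}^{\text{st}}(\calO_X, P_0) = \operatorname{M}(\calO_X, P_0)$. It does not: the hypothesis says nothing about sheaves that are \emph{not} rank~$1$, and the paper explicitly warns that $\operatorname{M}^{\text{st}}(\calO_X, P_0)$ may contain such sheaves. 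So strictly semi-stable non-rank-$1$ pure sheaves with Hilbert polynomial $P_0$ are not excluded, and $\operatorname{M}^{\text{st}}$ need not equal $\operatorname{M}$; consequently ``open and closed in $\operatorname{M}^{\text{st}}$'' does not by itself yield $B$-properness of $\bar{J}^{0}_{\calL}$.

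The repair is short. Theorem~\ref{Thm: Main Theorem} only requires the valuative criterion for $\bar{J}^{0}_{\calL}$ as a subfunctor of $\Sh$, not scheme-theoretic properness. Given a stable rank $1$ sheaf on the generic fiber over a DVR, semi-stable reduction (Langton) produces a flat extension with semi-stable special fiber; the Hilbert-polynomial computation in the \emph{proof} of Lemma~\ref{Lemma: OpenAndClosed}, applied to this family directly rather than inside the moduli space, shows the special fiber is again rank~$1$; your hypothesis then upgrades ``semi-stable rank $1$'' to ``stable,'' landing the section in $\bar{J}^{0}_{\calL}$. With this correction your argument is complete and on the same track as the paper's brief ``modification of the argument given in Corollary~\ref{Cor: EstevesJacDedekind}.''
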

\begin{proof}
	The local existence of a universal family (\cite[Thm.~2.1(4)]{simpson94}) implies that 
there is a  natural transformation $\bar{J}_{\calL}(X) \to \Sh$ with the property
that $J_{\calL}(X)$ is the pre-image of $\PicZero \subset \Sh$.  Furthermore, the slope stability condition
is a fiber-wise condition, so a modification of the argument given in Corollary~\ref{Cor: EstevesJacDedekind} 
completes the proof.
\end{proof}

\begin{remark}
A minor generalization of Corollary~\ref{Cor: SimpIsNeron} can be obtained by allowing 
for moduli spaces of degree $d$ lines bundles, with $d \ne 0$.   
If we are given a line bundle $\calM$ on $X$ with fiber-wise degree $d$, then 
there is an associated map $J^{d}_{\calL}(X) \to \Quot$ that extends the map on the 
generic fiber given by tensoring with $\calM^{-1}$.  With only notational changes the 
previous corollary generalizes to a statement about this map.  
\end{remark}

Corollary~\ref{Cor: SimpIsNeron} is, of course, only of interest when there exists an $\calL$ such that
$\calL$-slope stability coincides with $\calL$-slope semi-stability.  Thus, we ask: when does such a 
$\calL$ exist?  A comprehensive discussion of this question would require a  digression
on stability conditions, so we limit ourselves to reviewing known results about a single curve $X_0$
over an algebraically closed field.  When $X_0$ is integral, the stability condition is vacuous, so every 
 ample $\calL_0$ has the desired property.  If $X_0$ is reducible and has only nodes as singularities,
 then  Melo and Viviani have proven the existence of a suitable $\calL_0$ (\cite[Prop.~6.4]{viviani10}).  
Stability conditions on reduced, genus $1$ curves were analyzed by  L{\'o}pez-Mart{\'{\i}}n in \cite{lopez05}. 
She exhibits curves $X_0$ with the property that there is no $\calL_0$ such that every $\calL_0$-slope semi-stable,  
pure, rank $1$ sheaf degree $0$ is stable, but a suitable $\calL_0$ always exists if one considers sheaves of fixed 
degree $d \ne 0$.  Finally, stability conditions for a ribbon were analyzed in \cite{dawei}. On a ribbon, the stability condition is independent of $\calL_0$, and for rational ribbons, slope stability coincides with slope semi-stability precisely when the genus $g$ is even.  It would be desirable to have a general result asserting (non-)existence of a suitable $\calL_0$.

\subsection{Genus $1$ Curves} \label{Subsec: Genus1}
The N\'{e}ron model of the Jacobian of a genus $1$ curve can be quite complicated (e.g. see \cite{qing}), but 
these complications do not arise if the family admits a section. Suppose $B$ is a Dedekind scheme and
$f \colon X \to B$ is a family of curves such that the total space $X$ is regular and the generic fiber $X_{\eta}$
is smooth.  If  $\sigma \colon S \to X^{\text{sm}}$  is a section 
contained in the smooth locus, then there is a canonical identification of the smooth locus $X^{\text{sm}}$ with the N\'{e}ron 
model $\Neron$ of the Jacobian of $X_{\eta}$.  Here we examine how this fact fits into the preceding framework.

\begin{definition} \label{Def: UniFam}
	Let $f \colon X \to B$ be a family of genus $1$ curves over a Dedekind scheme and
$\sigma \colon B \to X^{\text{sm}}$ a section contained in the smooth locus.  We define 
a sheaf $\calI_{\text{uni.}}$ on  $X \times_{B} X$ by the formula
\begin{equation}
	\calI_{\text{uni.}} := \calI_{\Delta}( \pi_{1}^{*}(\sigma) + \pi_{2}^{*}(\sigma)).
\end{equation}
Here $\calI_{\Delta}$ is the ideal sheaf of the diagonal and $\pi_1, \pi_2 \colon X \times_{B} X \to X$ are the projection maps.
\end{definition}

The sheaf  $\mathcal{I}_{\text{uni.}}$ determines a transformation $X \to \Sh$ that realizes $X$ as a moduli space of sheaves over itself.   Proposition~\ref{Prop: OpenImmersion} and Theorem~\ref{Thm: Main Theorem} apply to this moduli space.

\begin{corollary} \label{Cor: Genus1Neron}
	Fix a Dedekind scheme $B$.  Let $f \colon X \to B$ be a family of genus $1$ curves.  Let $\sigma \colon B \to X^{\text{sm}}$ be a 
	section.  
	
	Then the natural map $X^{\text{sm}} \to \Quot$ is an open immersion.  Assume further:
	\begin{itemize}
		\item $f$ satisfies Hypothesis~\ref{Hyp: Factorial}.
	\end{itemize}
	Then $X^{\text{sm}} = \QuotT$ and this scheme is the N\'{e}ron model.
\end{corollary}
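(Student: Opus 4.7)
Following Corollaries~\ref{Cor: EstevesJacDedekind} and~\ref{Cor: SimpIsNeron}, I would first localize on $B$ and pass to a strict henselization to reduce to the case $B = S$, and then apply Theorem~\ref{Thm: Main Theorem} with $\bar{J} := X$ regarded as an $S$-scheme via $f$ and the transformation $\bar{J} \to \Sh$ classifying the family $\calI_{\text{uni.}}$ on $X \times_S X$ viewed as a family over the first factor. The three things to check are that this transformation is well-defined, that its line bundle locus is $X^{\text{sm}}$, and that the remaining hypotheses of the theorem hold.

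The first two points amount to a fiber computation. Flatness of $\calI_\Delta$ over the first factor follows from the short exact sequence $0 \to \calI_\Delta \to \calO_{X \times_S X} \to \calO_\Delta \to 0$ combined with flatness of $\calO_{X \times_S X}$ (because $\pi_1$ is a base change of $f$) and of $\calO_\Delta$ (because $\pi_1 \circ \Delta = \text{id}_X$) over $X$; twisting by the line bundle $\calO(\pi_1^*\sigma + \pi_2^*\sigma)$ preserves this flatness. On the fiber over $x \in X_b$ the summand $\pi_1^*\sigma$ pulls back to a trivial line bundle on $X_b$, so the restriction of $\calI_{\text{uni.}}$ is $\calI_{x,X_b} \otimes \calO_{X_b}(\sigma(b))$, which is pure of generic rank $1$; by constancy of $\chi$ on fibers of a flat family, specialized from the smooth generic fiber where the sheaf is the degree-$0$ line bundle $\calO_{X_\eta}(\sigma(\eta) - x)$, this sheaf has degree $0$. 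It is a line bundle exactly when $\calI_{x,X_b}$ is invertible, i.e.~when $x$ is a Cartier point of $X_b$, which by flatness of $f$ is equivalent to $f$ being smooth at $x$; hence $J = X^{\text{sm}}$.

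The remaining hypotheses are quick: $\bar{J} = X$ is $S$-proper so the valuative criterion holds automatically; $X^{\text{sm}} \to S$ is smooth and finitely presented; Hypothesis~\ref{Hyp: Factorial} is assumed; and the identification $J_\eta = \PicZero_\eta$ is the classical Abel-Jacobi isomorphism $x \mapsto \calO_{X_\eta}(\sigma(\eta) - x)$ for a smooth genus-$1$ curve with a rational point. Theorem~\ref{Thm: Main Theorem} then produces an open immersion $X^{\text{sm}} \hookrightarrow \QuotT = \Neron$. To upgrade to equality I would invoke clause (2) of the theorem: the action of $\QuotO$ on $\bar{J}$ is by tensoring with multi-degree-$0$ line bundles, which preserves being a line bundle, so $X^{\text{sm}}$ is stabilized by $\QuotO$. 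The only nontrivial step is the fiber computation in the second paragraph; the rest is a direct appeal to the main theorem.
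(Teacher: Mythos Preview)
Your approach matches the paper's exactly: the paper gives no proof of this corollary, merely asserting beforehand that ``Proposition~\ref{Prop: OpenImmersion} and Theorem~\ref{Thm: Main Theorem} apply to this moduli space,'' and you have filled in precisely those details. Your flatness argument for $\calI_\Delta$, the fiber computation identifying the line bundle locus with $X^{\text{sm}}$, and the check of the remaining hypotheses are all correct and more thorough than anything the paper supplies.

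One step deserves more care. Your verification of clause~(2) of Theorem~\ref{Thm: Main Theorem} argues that tensoring with a multi-degree-$0$ line bundle ``preserves being a line bundle, so $X^{\text{sm}}$ is stabilized by $\QuotO$.'' This reasoning works verbatim in Corollaries~\ref{Cor: EstevesJacDedekind} and~\ref{Cor: SimpIsNeron} because there $\bar{J}$ is \emph{defined} inside $\Sh$ by a multi-degree condition, so the $\PicT$-action visibly preserves $\bar{J}$ and hence its line bundle locus. Here, however, $\bar{J}=X$ sits in $\Sh$ via $x\mapsto\calI_x(\sigma)$, which is not manifestly a multi-degree condition; what you need is that $\calI_x(\sigma)\otimes\calL$ is again of the form $\calI_{x'}(\sigma)$ for some $x'\in X_b$. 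That amounts to showing that every line bundle on the Gorenstein genus-$1$ fiber $X_b$ whose multi-degree is a standard basis vector admits a regular section---a short Riemann--Roch/Serre-duality computation using $\omega_{X_b}\cong\calO_{X_b}$, but not the tautology you wrote. The paper does not supply this argument either, so your proof is no less complete than the original; the step is simply less automatic than the parallel with the Esteves case suggests.
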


Let us consider the special case where $B$ is a discrete valuation ring, $X$ is a minimal regular surface,
and the residue field $k(0)$ is algebraically closed.  The possibilities for the special fiber $X_0$ are given by the 
Kodaria--N\'{e}ron classification (\cite{kodaira} and \cite{neron64}; see \cite[p.~353-354]{silverman} 
for a recent exposition).  The reduced curves appearing in the classification are the Reduction
Types $\operatorname{{\uppercase\expandafter{\romannumeral 01}}}_{n}$,
$\operatorname{{\uppercase\expandafter{\romannumeral 02}}}$, $\operatorname{{\uppercase\expandafter{\romannumeral 03}}}$,
$\operatorname{{\uppercase\expandafter{\romannumeral 04}}}$.  In these cases, one may show that
the induced morphism $X \to \Sh$ identifies $X$ with the Esteves compactified Jacobian
$\bar{J}_{\calO}^{\sigma}$.  

In every  remaining case (Reduction Type $\operatorname{{\uppercase\expandafter{\romannumeral 01}}}^{*}_{n}$,
$\operatorname{{\uppercase\expandafter{\romannumeral 02}}}^{*}$, 
$\operatorname{{\uppercase\expandafter{\romannumeral 03}}}^{*}$, or
$\operatorname{{\uppercase\expandafter{\romannumeral 04}}}^{*}$) the morphism $X \to \Sh$ is 
not a special case of the fine moduli spaces discussed in the previous two sections.  Indeed, the special fiber
$X_0$ is non-reduced, so the Esteves Jacobian of $X$ is not defined.  In Section~\ref{Subsec: Simpson},
we reviewed Simpson's moduli space $\operatorname{M}^{\text{st}}(\calO_{X}, P_{d} )$ of stable sheaves, but the image of $X \to \Sh$ cannot be described as a closed subscheme of that space.  The reason is that slope stable sheaves are simple, but some fibers of $\calI_{\text{uni.}}$ are not simple.  Specifically, if $p_0 \in X_0$ lies on the intersection of 
two components, then the fiber of $\calI_{\text{uni}}$ of $p_0$ fails to be simple.  This can be seen as follows.
This fiber is the sheaf  $\calI_{p_0}( -\sigma(0) )$, where $\calI_{p_0}$ is the  ideal of $p_0$.  If $\nu \colon X'_0 \to X_0$ is 
 the blow-up of $X_0$ at $p_0$, then one may show that
$H^{0}(X'_0, \calO_{X_0'})$ is canonically isomorphic to the endomorphism ring of $\calI_{p_0}( -\sigma(0) )$.  An inspection
of the Kodaria--N\'{e}ron table shows that $X'_0$ is disconnected, so $H^{0}(X'_0, \calO_{X'_0})$ does not equal $k(0)$
and $\calI_{p_0}( -\sigma(0) )$ is not simple.

Corollary~\ref{Cor: Genus1Neron} provides a partial answer to a question posed in the introduction:
What are the maximal subfunctors $J$ of $\PicZero$ represented by a separated $B$-scheme?
When $X$ is, say, regular, a strong result  one could hope for is that
there is always a subfunctor $\bar{J}$ of $\Sh$ satisfying the 
hypotheses of Theorem~\ref{Thm: Main Theorem}.  The line bundle locus $J \subset \bar{J}$ in such a functor 
has the property that $J \to \QuotT$ is an isomorphism, and hence $J$ is maximal.  Corollary~\ref{Cor: Genus1Neron} 
shows that such a $\bar{J}$ exists when $f \colon X \to S$ is a family of genus $1$ curves that
$f$ admits a section.  Similarly, the Esteves compactified Jacobian represents a suitable subfunctor when 
$f$ has geometrically reduced fibers and admits a section. In general, however, the hope is too optimistic: 
Raynaud's family, mentioned at the end of  Section~\ref{Sec: MainThm}, has that property that no such $\bar{J}$ can exist.  

The question of describing maximal subfunctors $J$ is most interesting when $f$ has non-reduced fibers.  The slope stable line bundles form a subfunctor $J \subset \PicZero$ represented by a $S$-separated scheme, but our discussion of genus $1$ families together with Remark~\ref{Remark: OpenClosedFail} suggest that we should consider other methods for constructing a suitable $J$ when $f$ has non-reduced fibers.

In a different direction, one nice property of the moduli spaces described by Corollary~\ref{Cor: Genus1Neron} 
is that their geometry is very simple.  We use these spaces to provide an 
example showing that a family $J \to B$ of Esteves Jacobians over a regular $2$-dimensional 
base may not have group scheme structure.  

\begin{example} [N\'{e}ron models in $2$-dimensional families] \label{Example}
	We will construct a $2$-dimensional family $f \colon X \to B$ of plane cubics and an associated Esteves
	Jacobian $J \to B$ with the property that the group law on the locus $J_{U} \to U$ parameterizing non-singular
	cubics does not extend over all of $B$.  Furthermore, the family is constructed in such a 
	way that a dense open subset of $B$ is covered by non-singular curves $C$ with the property that the restriction 
	$X_{C}$ of $X$ to $C$ is regular, so $J_{C}$ is the N\'{e}ron model of its generic fiber (and in particular
	admits group scheme structure that extends the group scheme structure over $C \cap U$).  Thus, the 
	N\'{e}ron models fit into a $2$-dimensional family, but their group scheme structure does not.
	
	The idea is as follows.  The family we construct has a reducible element $X_{b_0} \to b_0$ with
	the property that, for every non-singular curve $C \subset B$ passing through $b_0$ such
	 that $X_{C}$ is regular, the restriction of the Esteves
	Jacobian $J_{C}$ is the N\'{e}ron model of its generic fiber.  The 
	fiber $J_{b_0}$ inherits a group law from this N\'{e}ron model, and we show 
	explicitly that this group law depends on the particular choice of $C$.  But,
	if the group law on $J_{U}$ extended to $J$, then all the different group 
	laws on $J_{b_0}$ coming from the different curves $C$ would be the
	restriction of one common group law on $J$, which is absurd.  We now construct the
	family.
	
	We work over an algebraically closed field $k$.  The family $X \to B$ will be a net of plane cubics.
	Let $X_0 \subset \mathbb{P}^{2}_{k}$ be a reducible plane cubics that is the union of
	a smooth quadric $Q_0$ and a line $L_0$ that meet in two distinct points.  (See Fig.~\ref{Figure: Pencil}.)
	Fix two general points $p_1, p_2 \in L_0(k)$ on the line and one general point $q_1 \in Q_0(k)$ on the quadric.  
	Say that $F \in H^{0}( \mathbb{P}^{2}_{k}, \calO(3))$ 
	is an equation for $X_0$ and $G, H \in H^{0}( \mathbb{P}^{2}_{k}, \calO(3))$ are two
	general cubic equations that vanish on all of the points $p_1,p_2, q_1$.  We will 
	work with the net $V := \langle F, G, H \rangle \subset H^{0}(\mathbb{P}^{2}_{k}, \calO(3))$
	and the associated family of curves
	\begin{align}
		X	:=	& \{ (p, [r, s, t]) \colon r \cdot F(p) + s \cdot G(p) + t \cdot F(p) = 0 \} \\
			\subset	& \mathbb{P}^{2}_{k} \times \mathbb{P}^{2}_{k}. \notag
	\end{align}
	There are two obvious morphisms $e, f \colon X \to \mathbb{P}^{2}_{k}$ given by the
	two projections.  If we set $B := \mathbb{P}^{2}_{k}$ equal to the plane, then the
	second morphism $f \colon X \to B$ realizes $X$ as a family 
	of genus $1$ curves with $X_0 = f^{-1}(b_0)$, $b_0 := [1,0,0]$.  Corresponding to the
	points $p_1, p_2, q_1 \in X_0(k)$ are three section $\sigma_1, \sigma_2, \tau_1 \colon 
	B \to X^{\text{sm}}$, which lie in the smooth locus by the generality assumption.
	
	Another application of the generality assumption shows that the fibers of $f$ are reduced,
	so we can form the Esteves Jacobian $J := J^{\sigma_1}_{\calE}$, where $\calE = \calO_{X}$.
	If $\calL_0$ is a line bundle on $X_0$, then the quasi-stability condition is that the bidegree $(\deg(\calL_{L_0}), \deg(\calL_{Q_0}))$ 
	equals $(0,0)$ or $(1, -1)$.  Now we assume $J \to B$
	is a group scheme and derive a contradiction.

	Suppose that we are given a general line $C \subset B$ in the plane that contains
	$b_0$.  Such a line corresponds to a $2$-dimensional linear subspace of the form $W := \langle F, G_{C} \rangle \subset V$
	for some $G_{C} \in V$.  Invoking generality again, the base locus
	\begin{equation} \label{Eqn: BaseLocus}
		\{ p \in \mathbb{P}^{2}_{k} \colon F(p) = G_{C}(p) = 0 \}
	\end{equation}
	consists of $9$ distinct points.  The first projection map $e \colon X \to \mathbb{P}^{2}_{k}$ realizes $X_{C}$ as
	the blow-up of the plane at these points, so $X_{C}$ is regular, and thus $J_{C}$ is the N\'{e}ron model
	of its generic fiber.  We now study the group of sections of $J_{C} \to C$.
	
	The base locus \eqref{Eqn: BaseLocus} includes the points $p_1, p_2, q_1$.  In addition to the points
	$p_1, p_2$, a unique third point of the base locus must lie on the line $L_0$.  Let us label that point
	$p_{C}$ and write $\sigma_{C} \colon C \to X^{\text{sm}}$ for the corresponding section.  
	
		 \begin{figure}[ht]
 \centering
	\includegraphics[width=0.40\textwidth]{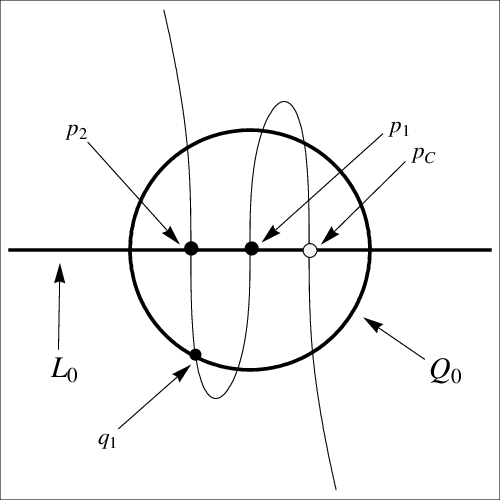}
 \caption{The pencil $X_{C}$.}
 \label{Figure: Pencil}
\end{figure}


	Now consider the following  line bundles on $X_{C}$:
	\begin{gather*} 
		\calL_1 := \calO_{X}(\sigma_1 - \tau_1), \\
		\calL_2 := \calO_{X}(\sigma_2 - \tau_1), \\
		\calL_{C} := \calO_{X}(\sigma_{C} - \tau_1), \\
		\calM := \calO_{X}(1) \otimes \calO_{X}(-3 \cdot \tau_1).
	\end{gather*}
	These lines bundles are all $\sigma_1$-quasi-stable. If we let  $g_1, g_2, g_C, h \in J_{C}(C)$  
	respectively correspond to $\calL_1, \calL_2, \calL_{C}, \calM$, then I claim we have
	\begin{equation} \label{Eqn: Summing}
		g_1 + g_2 + g_{C} = h.
	\end{equation}
	Indeed, it is enough to verify the claim after passing to the generic fiber of $J \to C$, where the equation
	is just the statement that the points $p_1, p_2, p_{C}$ all lie on a line (the line $L_0$).  Now suppose that
	$J \to \mathbb{P}^2_{k}$ admits a group law extending the group law of the generic fiber.  Then the 
	specialization of  Equation~\eqref{Eqn: Summing} to $J_{b_0}$ holds for all $C$ simultaneously.  In 
	particular, the isomorphism class of the line bundle  
	$\calO_{X_{b_0}}( p_{C} - q_1)$ is independent of the particular line $C \subset \mathbb{P}^{2}_{k}$ chosen.  
	But this is absurd: for distinct
	general lines $C_1, C_2$, the points $p_{C_1}$ and $p_{C_2}$ (and hence the associated line bundles) are distinct!  This completes
	our discussion of this example.
\end{example}

This example is particularly interesting in light of \cite{oda}.  The authors of that paper consider the case of a family of  nodal curves $f \colon X \to B$ over a suitable Dedekind scheme with the property that $X$ is regular.  Let $J_{\eta}$ be the Jacobian of the generic fiber.  Given a closed point $0 \in B$, they prove that the special fiber $\Neron_0$ of the N\'{e}ron model of $J_{\eta}$ depends only on the curve $X_0$ and not the particular family $f$ (\cite[Cor.~14.4]{oda}).  This result must be interpreted with care: in our example, the group law depends  on a particular choice of family, but any two such group laws define isomorphic group schemes.

\section{Acknowledgments}
The results of this paper are a part of the author's thesis. He would like to thank his advisor Joe Harris  for his invaluable help with this work.  This thesis was carefully read by Filippo Viviani and Margarida Melo.  Their feedback was very helpful to the author in writing this paper, and he would like to thank them.  We would like to thank Steven Kleiman for explaining the proof of Proposition~\ref{Prop: Projective}.  We would like to thank Valery Alexeev for a useful discussion of stability conditions.  We would also like to thank the anonymous referee, Bryden Cais, Lucia Caporaso, Eduardo Esteves, Robert Lazarsfeld, and Dino Lorenzini for feedback about early drafts of this paper.   Finally, we thank  Bhargav Bhatt, Matthew Satriano, and Karen Smith for conversations that were helpful in clarifying  technical aspects of this paper.

 
\bibliography{Neron}

\end{document}